\newtheorem{thm}{Theorem}[section]
\newtheorem{cor}[thm]{Corollary}
\numberwithin{equation}{section}
\newcommand{\R}{\mathbb R}
\newcommand{\C}{\mathbb C}
\newcommand{\Z}{\mathbb Z}
\newcommand{\T}{\mathbb T}
\def\TagOnRight
\def\R {\mathbb{R}}
\newcommand{\be}{\begin{equation}}
\newcommand{\ee}{\end{equation}}
\newcommand{\bea}{\begin{eqnarray}}
\newcommand{\eea}{\end{eqnarray}}
\newcommand{\Bea}{\begin{eqnarray*}}
\newcommand{\Eea}{\end{eqnarray*}}
\newcommand{\bt}{\begin{Theorem}}
\newcommand{\et}{\end{Theorem}}
\newcommand{\bpr}{\begin{Proposition}}
\newcommand{\epr}{\end{Proposition}}
\newcommand{\bl}{\begin{Lemma}}
\newcommand{\el}{\end{Lemma}}
\newcommand{\bi}{\begin{itemize}}
\newcommand{\ei}{\end{itemize}}
\newtheorem{Definition}{Definition}[section]
\newtheorem{Theorem}[Definition]{Theorem}
\newtheorem{Lemma}[Definition]{Lemma}
\newtheorem{Proposition}[Definition]{Proposition}
\newtheorem{Remark}[Definition]{Remark}
\begin{document}
\baselineskip16pt

\title[Functions operating on $M^{p,1}$ and nonlinear dispersive equations]{Functions operating on modulation spaces and nonlinear dispersive equations}
\author{Divyang G. Bhimani, P. K. Ratnakumar  }
\address {Harish-Chandra Research Institute, Allahabad-211019,
 India}
 \email { divyang@hri.res.in, ratnapk@hri.res.in}
\subjclass[2010]{Primary: 42B37,  Secondary: 42B35, 35A01.}
\keywords{Nonlinear operation, Modulation spaces, Dispersive equation, Local well-posedness}

\maketitle
\begin{abstract}
The aim of this paper is two fold. We show that if a complex function $F$ on $\C$
operates in  the modulation spaces $M^{p,1}(\R^n)$ by composition, 
then $F$ is real analytic on $\R^2 \approx \C$. This answers 
negatively, the open question posed in [M. Ruzhansky, M. Sugimoto, B. Wang,  Modulation Spaces and Nonlinear Evolution Equations, arXiv:1203.4651], regarding the 
general power type nonlinearity of the form $|u|^\alpha u$. We also characterise the 
functions that operate in the modulation space $M^{1,1}(\R^n)$. 

The local well-posedness of the NLS, NLW and NLKG equations for 
the `real entire' nonlinearities are also studied in some weighted modulation spaces $M^{p,q}_s(\R^n)$.

\end{abstract}
\section{Introduction}

A classical theorem of Wiener \cite{wiener} and L\'evy \cite{levy} tells us that if $f\in A(\mathbb T)$, where $A(\mathbb T) $ denotes the class of all functions on the unit circle whose Fourier series is absolutely convergent, and $F$ is defined and analytic on the range of $f,$ then $F(f) \in A(\mathbb T);$ where $F(f)$ is  the composition of functions $F$ and $f.$ 
Recently in this direction, M. Sugimoto et al. has shown in \cite{nonom}, that the Wiener-L\'evy type theorem is valid  in the realm of weighted modulation spaces $M^{p,q}_s (\R^n), 1\leq p,q \leq \infty, s> n/q', 1/q+1/q'=1$, for complex  entire function $F$ vanishing at $0$.  See section 2, for the definition of modulation spaces.

For $F: \R^2 \to \C$, we denote by $T_F$ the operator $T_F(f):=F(f)$.  One of the results that, we prove in this paper, namely Theorem \ref{convs}, says that if $F$ is real entire and $F(0)=0$, then $T_F$ maps  $X$ to itself, that is, $T_{F}(X)\subset X$; where 
$X=M^{p,1}(\mathbb R^{n}), (1\leq p \leq \infty) $ or $X=M^{p,q}_{s}(\mathbb R^{n}), (1\leq p,q \leq \infty, s>n/q');$ here the proof  relies on the multiplication algebra property of $X.$
In fact, we have gone a bit further, and shown that under the weaker hypothesis on $F,$ namely that $F$ is real analytic on $\mathbb R^{2}$ and $F(0)=0,$ then $T_{F}\left(M^{1,1}(\mathbb R^{n})\right)\subset M^{1,1}(\mathbb R^{n});$ the proof relies on the  invariant property of the modulation space $M^{1,1}(\mathbb R^{n})$ under the Fourier transform. This invariance is not available for $M^{p,1}(\R^n)$, when  $p > 1$.

Now the natural question is: What are all functions $F$ such that $T_F$ takes $M^{p,1}(\mathbb R^{n})$ to itself ?   In Theorem \ref{orao}, we show that  $T_F$ maps $M^{p,1}(\R^n) $ to itself, then $F$ must be real analytic on $\R^2$. The proof relies on the ``localized" version of ``time-frequency" spaces, which can be identified with the Fourier algebra on the torus $A(\T^n)$.  As a consequence of the above two results, we 
obtain a characterisation theorem for the functions $F$ that operates in $M^{1,1}(\R^n).$  In fact,  we show that  the real analytic functions are the only ones with the desired mapping property on $M^{1,1}(\R^n)$,  see Theorem \ref{re}.  

Our motivation for studying the above problem started with analysing well-posedness results for nonlinear  Schr\"{o}dinger equation $(NLS)$ for power type nonlinearities in Lebsgue spaces $L^{p}(\mathbb R^{n})$. 
Consider the initial value problem 
\begin{equation*}
 (NLS) \ \ \ i\frac{\partial }{\partial t}u(x,t)+\bigtriangleup_{x}u(x,t)= F(u(x,t)), ~~ u(x, 0)=u_{0}(x),
\end{equation*}
where  $ \bigtriangleup_{x}=  \sum_{j=1}^{n} \frac{\partial^{2}}{\partial x_j^2 }, (x,t) \in \R^n\times \mathbb R, i=\sqrt{-1}, $   $u_{0}$ is a complex valued function on $\R^n$ and the nonlinearity is given by a complex function $F$ on $\C$. 

The theory of $NLS$ and  other general nonlinear dispersive equations (\ref{nlw},
\ref{nlkg}) (local and global existence) is vast and has been studied extensively by many authors, see for instance \cite{gv}.
Almost exclusively, well-posedness has been established in energy space $H^1(\R^n)$ and $H^s(\R^n)$.  The techniques have been restricted to $L^2$ Sobolev spaces  because of the crucial role played by the Fourier transform  in the analysis of partial differential equations. 

The modern approach to study the well-posedness  of dispersive equations, is through the Strichartz estimates  \cite{rss}, satisfied by the linear propagator for the associated dispersive equation. We refer to the far reaching generalisation of Strichartz estimates in \cite {kt} by M. Keel and T. Tao,  and also \cite{tao, tc} and the references therein.

However, much less is known for the same problem with $L^p$ initial data, for $p \neq 2.$ The reason is the non availability of analogous Strichartz estimate for $L^p$ functions, for $p \neq 2$. Perhaps the first attempt 
in this direction started  in the work of Vargas and Vega \cite{vv}, where they studied the Schr\"{o}dinger equation 
with initial data having infinite $L^2$ norm. There are also other  attempts by considering the class of functions  like ${\mathcal F}(L^p)$ instead of $L^p$, see \cite{hr}.

An interesting subclass of $L^p(\mathbb R^{n})$ is the modulation space $M^{p,1}(\mathbb R^{n})$, for $1 \leq p \leq \infty$.
In fact, the modulation spaces have been attracted by many mathematicians,  working in the field of partial differential equations. The modulation spaces  provide an excellent  substitute and having interesting properties, that are known to 
fail on Lebesgue spaces.  For instance, the multiplier   $e^{it4 \pi^2|\xi|^{2}}$ corresponding to the  Schr\"odinger propagator $e^{-it \Delta}$, provides a bounded Fourier multiplier operator on $L^{p} (\R^n)$ only for $p=2$. This is a classical theorem 
of H\"ormander \cite{Hor, Leb}, where as all the multipliers  $e^{it|\xi|^{a}},~ 0\leq a \leq  2$ defines bounded Fourier multiplier operators on  $M_{s}^{p,q}(\R^n)$ for $1\leq p,  q \leq \infty, s\geq 0 $, as shown by B\'enyi-Gr\"ocheing-Okoudjou-Rogers in \cite{benyi}, see also \cite{ambenyi}. \\

Now we recall some well-posedness results on modulation spaces. The local well-posdeness of  $NLS$ in $M^{2,1} (\R^n)$, is a result of Baoxiang-Lifeng-Boling \cite{bao}, Theorem 1.1 and 1.2, with nonlinearities that includes the power-like; $F_{k}(u)=\lambda |u|^{2k} u, (k\in \mathbb N, \lambda \in\mathbb R)$ as well as the exponential-like; $F_{\rho}(u)=\lambda (e^{\rho |u|^{2}}-1)u, (\lambda, \rho \in \mathbb R)$. This was generalized by B\'enyi-Okoudjou in  \cite{ambenyi}; and in fact, their results give,    the local well-posdeness of  $NLS(\ref{nls})$,  $NLW$(\ref{nlw}) and $NLKG  (\ref{nlkg})$ in $M_{s}^{p,1}(\R^n)$ for $1\leq p \leq \infty, s\geq 0.$ The  nonlinearities included in their  work have the generic form $F(u)=g(|u|^{2})\, u$,  for some complex-entire function $g(z)$.\\

One of the key points in the above results is that, the above nonlinearities map the modulation space to itself. In fact, the proof of the above local well-posedness results  crucially relies on the fact that  $M^{p,1}_s(\mathbb R^{n})$ is a function algebra under pointwise multiplication: $\| f g \|_{M^{p,1}_s} \leq C \|f \|_{M^{p,1}_s} \|g\|_{M^{p,1}_s}$ for some constant $C$. Therefore, if  $\alpha =2k$, $|u|^{\alpha} u = u^{k+1} \bar{u}^k$ and hence 
$ \||u|^{\alpha} u\|_{M^{p,1}_s} \leq C \|u\|_{M^{p,1}_s}^{2k+1}$. Hence the nonlinearity of the type $F(z)=z|z|^{\alpha}, ~\alpha \in 2\mathbb N$ can be handled in this way. Of course it is very natural to ask, how far can one  go, to include more general nonlinear terms in these dispersive equations on modulation spaces? It was in this context M. Ruzhansky,  M. Sugimoto and B. Wang  posed  the open problem in \cite{ruzha}, namely the validity of the inequality $\|f|f|^{\alpha}\|_{M^{p,1}}\leq C\|f\|_{M^{p,1}}^{\alpha +1}$ for all $f\in M^{p,1}(\mathbb R^{n})$ and $\alpha \in (0, \infty)\setminus 2\mathbb N.$ In the present paper, we answer it negatively, see Corollary \ref{op} to Theorem \ref{orao}. In fact, Theorem \ref{orao} answers a much more general question. \\

Our  Theorem \ref{convs} has inspired us to consider nonlinearites of the form,
\begin{eqnarray} \label{nlf}
F(u)= G(u_{1}, u_{2});
\end{eqnarray}
where $u=u_{1}+iu_{2}$ and $G:\mathbb R^{2}\to \mathbb C$ is real entire on $\mathbb R^{2}$ with $G(0)=0.$ This generalizes the nonlinearities previously studied in modulation spaces. With the help of estimate (\ref{favest}) for 
real entire nonlinearities given by Theorem \ref{convs}, and well established Fourier multiplier estimates,  we prove  the  local well-posdeness results of  $NLS$ (\ref{nls}),  $NLW$ (\ref{nlw}), and $NLKG$ (\ref{nlkg}) with Cauchy data in $X,$ where $X$ denotes the spaces  $M_{s}^{p,1}(\mathbb R^{n}), (1\leq p \leq \infty, s\geq 0);$  or $M_{s}^{p,q}(\mathbb R^{n}), (1\leq p, q \leq \infty, s> n/q'),$ see  Theorems \ref{nlsrt}, \ref{pnlw} and Theorem \ref{pkg}.
 \begin{Definition}
We say, a complex function $F$ on $ \mathbb R^2$
 operates in the modulation space  $M^{p,q}_s(\R^n)$, if $F(f_1,f_2)\in M^{p,q}_s(\mathbb R^{n})$ whenever $f= f_1+ if_2\in M^{p,q}_s(\mathbb R^{n}) $, with $f_1 = \text{Re}(f)$.
\end{Definition}
\begin{Remark}
Note that the above is a weak definition in the sense that, it doesn't demand any norm inequality for $F(f)$. 
\end{Remark}

We mainly prove three results in this paper. Theorem \ref{orao} shows that if $F$ operates in the modulation space
$M^{p,1} (\R^n), (1\leq p \leq  \infty)$ then $F$ has to be real analytic on $\R^2$. For a converse, we prove two facts:
If $F$ is real analytic on $\R^2$ and $F(0)=0$, then $F$ operates in $M^{1,1}(\R^n)$. Moreover,  if $F$ is real entire and $F(0)=0$ then $F$ 
operates in $M^{p,1}_s(\R^n) $ for $1\leq p \leq\infty, ~ s\geq 0$ and also in  $ M^{p,q}_{s}(\mathbb R^{n}), $ for $1\leq p,q \leq  \infty, ~s>n/q'$, see Theorem \ref{convs}.  As a corollary of the above two results,
we also obtain the following interesting characterising result for functions operating in $M^{1,1}(\R^n)$:

\begin{Theorem} \label{re}

Let $F$ be a complex valued function on $ \R^2$. 
Then $F$ operates in $M^{1,1}(\R^n)$ if and only if $F$ is real analytic on $\R^2$ and $F(0)=0$. 
\end{Theorem}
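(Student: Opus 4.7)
\medskip

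\noindent\textbf{Proof plan for Theorem \ref{re}.} The statement is a direct combination of Theorems \ref{orao} and \ref{convs} together with a very short argument showing $F(0)=0$, so the plan is essentially to package the two earlier results into a single ``iff'' and handle the normalization at the origin.

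For the ``if'' direction, assume $F:\R^2\to\C$ is real analytic on $\R^2$ and $F(0)=0$. Then the hypotheses of the $M^{1,1}$-part of Theorem \ref{convs} hold verbatim, and that result gives $T_F(M^{1,1}(\R^n))\subset M^{1,1}(\R^n)$, which by definition means $F$ operates in $M^{1,1}(\R^n)$. Nothing extra is required at this stage.

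For the ``only if'' direction, suppose $F$ operates in $M^{1,1}(\R^n)$. Applying Theorem \ref{orao} with $p=1$ immediately gives that $F$ is real analytic on all of $\R^2$. It remains to prove $F(0)=0$. Since the zero function belongs to $M^{1,1}(\R^n)$, the hypothesis forces the constant function $g\equiv F(0)$ to lie in $M^{1,1}(\R^n)$. However, $M^{1,1}(\R^n)$ continuously embeds into $L^1(\R^n)\cap C_0(\R^n)$ (in particular, every element of $M^{1,1}(\R^n)$ vanishes at infinity), so the only constant function lying in $M^{1,1}(\R^n)$ is the zero function. Hence $F(0)=0$, completing the reverse implication.

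The only place where a small remark is needed is the last step: the embedding $M^{1,1}(\R^n)\hookrightarrow C_0(\R^n)$ (or equivalently the fact that nonzero constants are not in $M^{1,1}$) is standard from the definition via the short-time Fourier transform, and will either be invoked from the preliminaries in Section~2 or recorded as a one-line observation. No step in this proof is a real obstacle; the heavy lifting has already been carried out in Theorems \ref{orao} and \ref{convs}.
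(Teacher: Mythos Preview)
Your proposal is correct and follows essentially the same approach as the paper: both directions are obtained by directly invoking Theorems \ref{orao} and \ref{convs}. The only minor difference is that you argue $F(0)=0$ separately via the embedding $M^{1,1}(\R^n)\hookrightarrow L^1(\R^n)$ (or $C_0(\R^n)$), whereas in the paper this conclusion is already recorded as part of Theorem \ref{orao} for $1\leq p<\infty$ using the same ``no nonzero constants in $L^p$'' observation; so your extra step is correct but redundant.
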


We would like to point out that Theorem \ref{orao} throws light on the limitation of the prevailing method of studying well-posedness in modulation spaces $M^{p,1}_s(\R^n)$ using the algebraic property available in these spaces. Our result (Theorem \ref{orao}) shows that this approach using the algebraic property or even the general 
mapping property of the nonlinearity  of the modulation space to itself,  can  
handle only the so-called real analytic nonlinearities on $M^{p,1}(\R^n)$. In particular, the nonlinearities of interest in applications, 
namely the power type $F(u)= |u|^\alpha u$ for  $\alpha \notin 2{\mathbb N}$, and also the exponential type $F(u) = e^{u |u|} -1$ are ruled out in this approach. This leads to the fact that to deal with local  existence for nonlinear Schr\"{o}dinger equation and other dispersive equations  with power type nonlinearity $|u|^\alpha u$ when $\alpha$ is not an even integer, requires some new 
approach.

\section{Modulation spaces}
\label{prlm}
In this section, we briefly discuss the modulation spaces and some relevant properties of these spaces.
Modulations  spaces were introduced during the early eighties in the pioneering work of H.G. Feichtinger \cite{fei1}. 
Subsequently, in a joint work with K. Gr\"ochenig, the basic theory of these function 
spaces were established in  \cite{fei, fei-gr}. Most of the results we discuss in this section can be seen in the book by Gr\"ochenig \cite{gro}. 

Let $\mathcal{S}(\mathbb R^{n})$ and  $\mathcal{S'}(\mathbb R^{n})$ denote the Schwartz space and the space of tempered distributions, respectively. We define the Fourier transform of $f\in \mathcal{S}(\mathbb R^{n})$ by 
\begin{eqnarray}
\mathcal{F}f(w)=\widehat{f}(w)=\int_{\mathbb R^{n}} f(x) \, e^{-2\pi i w\cdot x} dx,~ w\in \mathbb R^{n},
\end{eqnarray}
and the inverse Fourier transform by
\begin{eqnarray}
\mathcal{F}^{-1}f(x)=f^{\vee}(x)=\int_{\mathbb R^{n}} f(w)\, e^{2\pi i x\cdot w} dw,~~x\in \mathbb R^{n}.
\end{eqnarray} 
The Fourier transform is an isomorphism of the Schwartz space $\mathcal{S}(\mathbb R^{n})$ onto itself, and extends to the tempered distributions by duality.
Hence for every tempered distribution $f$, we have $$(\widehat{f} \,\, )^{\vee}=f=\widehat{(f^{\vee})}.$$ 

The modulation spaces are defined in terms of the short time Fourier transform. The short time Fourier transform of a function $f$ with respect to a window function $g \in {\mathcal S}(\R^n)$ is defined by
\bea  V_{g}f(x,w) = \int_{\R^n} f(t) \overline{g(t-x)} \, e^{-2\pi i w\cdot t} \, dt,  ~  (x, w) \in \mathbb R^{2n} \eea
whenever the integral exists. 

For $x, w \in \R^n$ the translation operator $T_x$ and the modulation operator $M_w$ are
defined by $T_{x}f(t)= f(t-x)$ and $M_{w}f(t)= e^{2\pi i w\cdot t} f(t).$ In terms of these
operators 
the short time Fourier transform may be expressed as
\bea \label{ipform} V_{g}f(x,w)=\langle f, M_{w}T_{x}g\rangle,\eea
 where $\langle f, g\rangle$ denotes the inner product for $L^2$ functions,
or the action of the tempered distribution $f$ on the Schwartz class function $g$.  Thus $V: (f,g) \to V_g(f)$ extends to a bilinear form on $\mathcal{S}'(\mathbb R^{n}) \times \mathcal{S}(\mathbb R^{n})$ and $V_g(f)$ defines a uniformly continuous function on $\R^{n} \times \R^n$ whenever $f \in \mathcal{S}'(\R^n) $ and $g \in  \mathcal{S}(\R^n)$.

Since $\overline{M_wT_x g} = e^{-2\pi i x \cdot w} T_x M_w g^*$ with $g^*(y) = \overline{g(-y)}$,
from (\ref{ipform})
we see that the short time Fourier transform can also be expressed as a convolution: \bea \label{conform} V_{g}f(x,w) =e^{-2\pi i x \cdot w} \left( f\ast M_w g^*\right)(x). \eea

\begin{Definition} Let $1 \leq p,q \leq \infty, ~s\geq 0$ and $0\neq g \in{\mathcal S}(\R^n)$. The  Modulation space   $M^{p,q}_s (\mathbb R^{n})$
is defined to be the space of all tempered distributions $f$ for which the following  norm is finite:
\bea \label{norm} \|f\|_{M^{p,q}_{s}}=  \left(\int_{\R^n}\left(\int_{\R^n} |V_{g}f(x,w)|^{p} dx\right)^{q/p} \langle w \rangle_s^{q} \, dw\right)^{1/q},\eea
where $\langle w \rangle_s=(1+|w|^{2})^{s/2}$, for $ 1 \leq p,q <\infty$. If $p$ or $q$ is infinite, $\|f\|_{M^{p,q}_s}$ is defined by replacing the corresponding integral by the essential supremum. When $s=0$, we write $M^{p,q}(\R^n)$ instead of $M^{p,q}_0(\R^n)$.
\end{Definition}

Note that the modulation space norm is the weighted  mixed $L^p$ norm of the short time Fourier transform $ V_{g}f $ on $\mathbb R^{n} \times \mathbb R^{n}$ in $L^{p,q}_s$ with measure $dx \langle w  \rangle_s d w .$  In particular, if $p=q$, the modulation space norm is just the weighted $L^p$ norm of $V_gf$ with weight $\langle w  \rangle_s$. 
The Modulation spaces can also be defined for  exponents $0<p,q <1$, see Kobayashi \cite{kob}, however, we will restrict to the case $1\leq p, q \leq \infty, s\geq 0$.

\begin{Remark}
\label{equidm}
The definition of the modulation space given above, is independent of the choice of 
the particular window function. In fact if $g$ and $g'$ are any two window functions, then we have the relation
$$\| V_{g'}f \|_{L^{p,q}_s} \lesssim \| V_{g'}g \|_{L^{1,1}_s} \| V_{g}f \|_{L^{p,q}_s},$$  see \cite[p.233]{gro}.
It follows that, the modulation space norms given by $g$ and $g^\prime$ are equivalent. Here $A\lesssim B$ stands for $A \leq C B$ for some constant $C$. 
\end{Remark}

Now we recall some relevant properties of the modulations spaces. First of all $M^{p,q}_{s} (\R^n),~1\leq p, q \leq \infty,$ are Banach spaces with respect to the norm given by (\ref{norm})  and contains ${\mathcal S}(\R^n)$ as a dense subspace,  see Theorem 11.3.5 in  \cite{gro}. 

$M^{p,q}_{s}(\R^n)$ is invariant under the so called ``time - frequency" shifts 
$$T_{x_0} M_{w_{0}} : f(x) \mapsto e^{2 \pi i w_0 \cdot (x-x_{0})} f(x-x_0),$$ for $x_0, w_0 \in \R^n$. This invariance  follows immediately from the observation
$$V_{g}(T_{x_0} M_{w_{0} }f) (x, w)= e^{-2\pi iw \cdot x_0}V_{g}f (x-x_0, w-w_0),$$
and the fact that the mixed $L^p$ space,  $L^{p,q}(\R^n \times \R^n)$ are invariant under modulation and translation operators. $M^{p,q}(\mathbb R^{n})$ is invariant under the Fourier transform, when $p=q, 1\leq p < \infty$.
In fact, a straight forward computation gives the identity
$$ V_gf(x, w) = e^{-2 \pi i x \cdot w } \, V_{\widehat{g}} \widehat{f}(w, -x).$$
Choosing $g(x)=e^{-\pi |x|^2}$ so that $g=\widehat{g}$, and taking the $L^p$ norm, we get
\bea \label{ftinv}
\|f \|_{M^{p,p}}= \|\widehat{f} \|_{M^{p,p}}.\eea Thus the Fourier transform defines an isometry on 
$M^{p,p}(\R^n)$, for $1 \leq p < \infty$. Note that the above arguments are valid only for $s=0$.

The modulation spaces are also invariant under complex conjugation:  Since $V_g\bar{f}(x,w) = \overline{V_gf(x,-w)}$ for  real valued $g$, we have $\|\bar{f} \|_{M^{p,q}}= \|f\|_{M^{p,q}}$. From this, it also follows the useful inequality 
\bea \label{reimineq} \|\text{Re}f\|_{M^{p,q}_s} \leq \| f\|_{M^{p,q}_s}, ~ ~ \|\text{Im}f\|_{M^{p,q}_s} \leq \| f\|_{M^{p,q}_s} .\eea

We also have the following embedding results (see \cite{gro}, Theorem 12.2.2 ),
\bea M^{p_{1},q_{1}}_{s} (\R^n)\hookrightarrow M_{s}^{p_{2}, q_{2}}(\R^n),\eea
 if $p_{1}\leq p_{2}$ and $q_{1} \leq q_{2}$.  Also, if $q_{1}\leq q_{2}$, an application of H\"olders inequality,
 shows that 
\bea M^{p,q_{1}}_{s_{1}}(\R^n) \hookrightarrow M^{p, q_{2}}_{s_{2}}(\R^n),\eea
whenever $s_{1}-s_{2} > n/q_{2}-n/q_{1}.$ As a consequence of these two inclusions, we see that  for $s>n/q'$, 
\bea M^{p,q}_{s} (\R^n)\hookrightarrow M^{\infty, 1} (\R^n).\eea

There are several embedding results between Lebesgue, Sobolev, or Besov spaces and modulation spaces, see for example, \cite{embd, sugi}. We note, in particular that the $L^2$ Sobolev space $H^s(\mathbb R^{n})$ coincides with $M^{2,2}_{s}(\mathbb R^{n}).$  We also refer to \cite{baob} for some recent developments in PDEs on Modulation spaces and the references therein. 

Now we prove the following result. The case $p=q=1$, has been observed in \cite{gro}.
\begin{Proposition} \label{convoreslt}
If $k \in L^1(\mathbb R^{n})$ and $f \in M^{p,q}(\mathbb R^{n})$ then $k \ast f \in M^{p,q}(\mathbb R^{n}), ~ 1\leq p,q < \infty.$ Moreover, we have the inequality
\bea
\| k \ast f \|_{M^{p,q}} \leq \| k \|_{L^1} \, \|  f \|_{M^{p,q}}.\eea 
\end{Proposition}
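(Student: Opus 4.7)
The plan is to reduce everything to Young's convolution inequality applied in the $x$-variable, using the convolution representation of the short time Fourier transform already recorded as (\ref{conform}) in the paper.

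First I would compute $V_g(k\ast f)$ explicitly. Using (\ref{conform}) and the associativity of convolution, one has
\[
V_g(k\ast f)(x,w) = e^{-2\pi i x\cdot w}\bigl((k\ast f)\ast M_w g^{*}\bigr)(x) = e^{-2\pi i x\cdot w}\bigl(k\ast (f\ast M_w g^{*})\bigr)(x).
\]
Since $(f\ast M_w g^{*})(y) = e^{2\pi i y\cdot w} V_g f(y,w)$ by another application of (\ref{conform}), inserting this inside the convolution with $k$ and pulling out the phase $e^{-2\pi i x\cdot w}$ gives
\[
V_g(k\ast f)(x,w) = \int_{\R^n} k(x-y)\, e^{-2\pi i(x-y)\cdot w}\, V_g f(y,w)\, dy = \bigl(k_w \ast V_g f(\cdot,w)\bigr)(x),
\]
where $k_w(u):= k(u)\, e^{-2\pi i u\cdot w}$ satisfies $|k_w|=|k|$, so $\|k_w\|_{L^1}=\|k\|_{L^1}$ uniformly in $w$.

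Next I would apply Young's inequality in the $x$-variable, for each fixed $w$, to obtain
\[
\bigl\| V_g(k\ast f)(\cdot,w)\bigr\|_{L^p(\R^n)} \leq \|k\|_{L^1}\, \bigl\| V_g f(\cdot,w)\bigr\|_{L^p(\R^n)}.
\]
Taking the $L^q$-norm in $w$ on both sides and pulling out the constant $\|k\|_{L^1}$ immediately yields $\|k\ast f\|_{M^{p,q}}\leq \|k\|_{L^1}\|f\|_{M^{p,q}}$, as desired.

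There is no real analytic obstacle; the only subtlety, if any, is interpreting $k\ast f$ when $f$ is a tempered distribution. This is handled by first carrying out the argument for $f\in\mathcal{S}(\R^n)$, where all integrals converge absolutely and Fubini is trivially justified, and then appealing to the density of $\mathcal{S}(\R^n)$ in $M^{p,q}(\R^n)$ for $1\le p,q<\infty$ (recalled just above in the text) together with the inequality just proved to extend to general $f\in M^{p,q}(\R^n)$.
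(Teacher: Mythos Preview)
Your argument is correct and follows essentially the same route as the paper: both use the convolution representation (\ref{conform}), associativity, Young's inequality in the $x$-variable, and then the $L^q$-norm in $w$. The paper simply takes absolute values at the outset so the phase disappears and works directly with $|k\ast[f\ast M_w g^*](x)|$, whereas you track the phase to obtain the exact identity $V_g(k\ast f)(\cdot,w)=k_w\ast V_g f(\cdot,w)$; this extra bookkeeping is harmless but unnecessary, and your density remark at the end is a welcome clarification the paper omits.
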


\begin{proof}
The proof follows from the corresponding property for the Lebesgue space $L^1(\mathbb R^{n})$. In fact, by the identity (\ref {conform}) we see that
 \Bea |V_g[k \ast f](x,w)| = |k \ast  f \ast M_w g^*(x)|=  |k \ast [ f \ast M_w g^*](x)|.\Eea Now taking $L^p$ norm with respect to the $x$ variable
and applying the convolution result in $L^p(\R^n, dx)$, and then $L^q$ norm with respect to the variable $w$, gives the result.
 \end {proof}

Next we prove an approximation result on the modulation space $M^{p,1}(\R^n)$ for $1\leq p <\infty.$
Let $\phi \in \mathcal{S}(\mathbb R^{n}),$  with $\int_{\R^n} \phi =1$ and  and set 
$\phi_r(x):= r^{-n} \phi(x/r),  r>0.$ Then the family $\{ \varphi_r \}_{r>0}$ is called an approximate identity in  $M^{p,1}(\mathbb R^{n})$ in view of the next lemma. We use only the case $p=q=1$.

\begin{Lemma} \label{conv} \label{ai}
Let $\{\phi_r\}_{r>0}$ be as above and $f\in M^{p,q}(\mathbb R^{n}), 1\leq p,q <\infty.$ Then  given $\epsilon >0$, there exists a $\delta>0$ such that $\|f\ast \phi_{r}-f\|_{M^{p,q}} <\epsilon $ whenever $r<\delta $. 
\end{Lemma}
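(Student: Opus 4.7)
The plan is to reduce the modulation-space approximation statement to the classical approximate-identity result for Lebesgue spaces, using the convolution representation (\ref{conform}) of the short time Fourier transform and the dominated convergence theorem.

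First I would fix a Gaussian window $g$ and, for each $w \in \R^n$, set $F_w(x) = (f \ast M_w g^{\ast})(x)$, so that by (\ref{conform})
\[
|V_g f(x,w)| = |F_w(x)|, \qquad \|f\|_{M^{p,q}}^{q} = \int_{\R^n} \|F_w\|_{L^p}^{q}\, dw.
\]
Since the right-hand side is finite by hypothesis, $F_w \in L^p(\R^n)$ for almost every $w$. Next, using commutativity and associativity of convolution, I would observe that
\[
V_g(f \ast \phi_r - f)(x,w) = e^{-2\pi i x\cdot w}\bigl(F_w \ast \phi_r - F_w\bigr)(x),
\]
because $(f \ast \phi_r) \ast M_w g^{\ast} = (f \ast M_w g^{\ast}) \ast \phi_r = F_w \ast \phi_r$. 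Taking the $L^{p,q}$ mixed norm yields the clean identity
\[
\|f \ast \phi_r - f\|_{M^{p,q}}^{q} = \int_{\R^n} \|F_w \ast \phi_r - F_w\|_{L^p}^{q}\, dw.
\]

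Now the problem is reduced to a standard one: for each fixed $w$ with $F_w \in L^p(\R^n)$ and $1 \leq p < \infty$, the classical fact that $\{\phi_r\}_{r>0}$ is an approximate identity on $L^p$ gives $\|F_w \ast \phi_r - F_w\|_{L^p} \to 0$ as $r \to 0$. For the dominated convergence step I would estimate uniformly
\[
\|F_w \ast \phi_r - F_w\|_{L^p}^{q} \leq \bigl(1 + \|\phi\|_{L^1}\bigr)^{q}\, \|F_w\|_{L^p}^{q},
\]
using Young's inequality and $\|\phi_r\|_{L^1} = \|\phi\|_{L^1}$. The right-hand side is integrable in $w$ because its integral equals $(1+\|\phi\|_{L^1})^{q}\|f\|_{M^{p,q}}^{q}$, so the dominated convergence theorem applied to the $w$-integral forces the left-hand side of the displayed identity to zero as $r \to 0$, which is precisely the claim.

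The main obstacle, if any, is purely bookkeeping: verifying that $(f \ast \phi_r) \ast M_w g^{\ast} = F_w \ast \phi_r$ holds for a tempered distribution $f$ (rather than an $L^1$ function), and that the scalar-valued integrand in the mixed-norm integral is measurable in $w$. Both are routine: $V_g f$ is a continuous function on $\R^{2n}$ for $f \in \mathcal{S}'(\R^n)$ by the discussion following (\ref{ipform}), and one can justify the convolution identity either by testing against Schwartz functions or by invoking it first on a dense subset (for instance on $\mathcal{S}(\R^n)$, which is dense in $M^{p,q}$ for $1 \leq p,q < \infty$) and then passing to the limit using Proposition \ref{convoreslt}. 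Thus the argument is essentially a clean application of the Lebesgue approximate-identity theorem lifted, via the STFT, to the modulation space.
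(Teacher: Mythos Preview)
Your proposal is correct and takes a genuinely different route from the paper's proof.

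The paper argues as follows: it writes $f\ast\phi_r - f = \int_{\R^n}[T_{rz}f - f]\,\phi(z)\,dz$, applies Minkowski's inequality for integrals to obtain
\[
\|f\ast\phi_r - f\|_{M^{p,q}} \leq \int_{\R^n} \|T_{rz}f - f\|_{M^{p,q}}\,|\phi(z)|\,dz,
\]
and then invokes dominated convergence in the $z$-variable, using that $\|T_{rz}f - f\|_{M^{p,q}}\to 0$ (which in turn is reduced, via the covariance identity $V_g T_{rz}f = M_{(0,-rz)}T_{(rz,0)}V_g f$, to continuity of translation and modulation in $L^{p,q}(\R^{2n})$). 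This is carried out first for $f\in\mathcal{S}(\R^n)$ and then extended to general $f\in M^{p,q}$ by density and Proposition~\ref{convoreslt}.

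You instead exploit the convolution form (\ref{conform}) of the STFT to disintegrate the modulation norm as an $L^q(dw)$-integral of $L^p$-norms, so that the problem becomes the classical $L^p$ approximate-identity statement for each frequency slice $F_w$, followed by dominated convergence in $w$. This is more direct: it works for arbitrary $f\in M^{p,q}$ without any density argument, and the associativity $(f\ast\phi_r)\ast M_w g^\ast = (f\ast M_w g^\ast)\ast\phi_r$ needed at the distributional level is standard since both $\phi_r$ and $M_w g^\ast$ lie in $\mathcal{S}(\R^n)$. The paper's approach, by contrast, isolates the continuity of translation $y\mapsto T_y f$ in $M^{p,q}$ as a separate intermediate fact, which has some independent value; but the price is the extra density step. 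Both arguments ultimately rest on continuity of translation in ordinary Lebesgue spaces.
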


\begin{proof} The proof is straightforward. First we assume that $f \in {\mathcal S}(\R^n)$.
Since $\int_{\R^n} \phi =1$, setting $y=rz,$ we see that,
\begin{eqnarray*}
f\ast \phi_{r}(t)-f(t) & = &  \int_{\mathbb R^{n}} [ f(t-y)- f(t) ] \phi_{r}(y) dy\\ 
                       & = &  \int_{\mathbb R^{n}} [ f(t-rz)-f(t) ] \phi(z) dz \\
                       & = & \int_{\mathbb R^{n}} [ T_{rz}f(t)-f(t) ] \phi(z) dz.
\end{eqnarray*}
Put $h_{r}(t)= f\ast \phi_{r}(t)- f(t);$ and take $0\neq g\in \mathcal{S}(\mathbb R^{n}).$ Then
\begin{eqnarray*}
V_{g}h_{r}(x,w)   &=& \int_{\mathbb R^{n}} V_{g}(T_{rz}f- f )(x,w) \,  \phi(z) dz.\end{eqnarray*}
Taking mixed $L^{p,q}$ norm and an application of Minkowski's inequality for integrals, this gives,
\begin{eqnarray*} \label{many}
\|h_{r}\|_{M^{p,q}} \leq \int_{\mathbb R^{n}} \|T_{rz}f-f\|_{M^{p,q}} \, |\phi(z)| dz.
\end{eqnarray*}
Now the proof follows from the dominated convergence theorem. Note that 
$\|T_{rz}f-f\|_{M^{p,q}} \leq 2 \|f\|_{M^{p,q}}$ by translation invariance of $M^{p,q}$ norm. 

Also since $V_{g}T_{rz}f(x,w)=M_{(0,-rz)}\left(T_{(0,rz)}V_{g}f\right)(x,w),$  we have
\begin{eqnarray*}
\|T_{rz}f \!\!\!&- & \!\!\! f\|_{M^{p,q}} = \|V_{g}T_{rz}f - V_{g}f\|_{L^{p,q}} \\
 & =&  \|M_{(0,-rz)}(T_{(rz,0)}V_{g}f)- M_{(0,-rz)}(V_{g}f)+M_{(0,-rz)}(V_{g}f)-V_{g}f\|_{L^{p,q}}\\
& \leq &  \|T_{(rz,0)}(V_{g}f)-V_{g}f\|_{L^{p,q}} +\|M_{(0,-rz)}(V_{g}f)-V_{g}f\|_{L^{p,q}}
\end{eqnarray*}
each of these tend to $0$ as $r\to 0$, again by the continuity of the  translation and modulation operators in the mixed $L^p$ space $L^{p,q}(\mathbb R^{2n}), ~(1\leq p,q < \infty)$.  

To complete the proof, we note that, if $f$ is a general element in $M^{p,q}(\R^n)$, then by density, we can choose a 
$g\in {\mathcal S}(\R^n)$ such that $\| f-g\|_{M^{p,q}}< \frac{\epsilon}{4}.$ Then 
\Bea \| f \ast \phi_r  \!\!\! &-& \!\!\! f\|_{M^{p,q}} \\
&\leq & \|(f-g) \ast \phi_r \|_{M^{p,q}}+ \|g \ast \phi_r -g\|_{M^{p,q}} + 
\| g-f\|_{M^{p,q}}\\
&\leq &2 \|(f-g) \|_{M^{p,q}}+ \|g \ast \phi_r -g\|_{M^{p,q}}\Eea
in view of Proposition \ref{convoreslt}.  Thus the general case follows since $g \in {\mathcal S}(\R^n)$.
\end{proof}

\begin{Remark}\label{rmkmany}
For future use we record that,  if there are finitely many functions $f_1,...,f_N$, a single $\delta$ can be chosen that works for all $f_i$'s, by simply choosing $\delta = \min\{\delta_i : i=1,2,...N \}.$
\end{Remark}

Some of the modulations spaces $M^{p,q}_{s}(\mathbb R^{n})$ are multiplicative algebras.
To be more specific, we state the following result. For the proof, see  \cite[Proposition 3.2]{nonom},  \cite{bao}, \cite[Corollary 2.7]{ambenyi}.\begin{Proposition}\label{apm}
Let $ X=M^{p,q}_{s}(\mathbb R^{n}), 1\leq p,q \leq  \infty$ and $s> n/q',$ or $X=M^{p,1}_{s}(\mathbb R^{n}),1\leq p \leq \infty, s\geq 0.$ Then $X$ is a multiplication algebra, and we have the inequality 
\bea \label{algineq}
\|f\cdot g\|_{X} \lesssim \|f\|_{X} \|g\|_{X},
\eea
for all $f, g\in X$. 
\end{Proposition}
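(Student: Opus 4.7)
The plan is to establish the algebra estimate via the uniform-frequency (isometric) decomposition characterization of the modulation spaces. Fix a smooth partition of unity $\{\sigma_k\}_{k \in \Z^n}$ with each $\sigma_k(\xi)=\sigma_0(\xi-k)$ supported in a cube of fixed side about $k$ and $\sum_{k \in \Z^n}\sigma_k \equiv 1$, and set $P_k := \sigma_k(D) = \mathcal{F}^{-1}\sigma_k\mathcal{F}$. The standard equivalence
$$\|f\|_{M^{p,q}_s} \,\sim\, \Bigl(\sum_{k \in \Z^n} \langle k \rangle^{sq}\, \|P_k f\|_{L^p}^{q}\Bigr)^{1/q}$$
(see \cite{gro}) reduces the algebra estimate to a discrete convolution statement. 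Each $P_k$ is bounded on $L^p$ uniformly in $k$ (its kernel is a modulation of the fixed Schwartz function $\mathcal{F}^{-1}\sigma_0$), and because the Fourier support of $P_j f$ lies in a cube of bounded size, Bernstein's inequality gives $\|P_j f\|_{L^\infty}\lesssim \|P_j f\|_{L^p}$ uniformly in $j$.

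The key structural observation is that $\widehat{P_j f \cdot P_l g} = \widehat{P_j f} \ast \widehat{P_l g}$ is supported in $\mathrm{supp}\,\sigma_j + \mathrm{supp}\,\sigma_l$, a cube of bounded size centered at $j+l$. Consequently $P_k(P_j f \cdot P_l g)$ vanishes unless $|j+l-k|\leq C$ for some absolute constant $C$, and expanding $fg = \sum_{j,l} P_j f \cdot P_l g$ yields
$$\|P_k(fg)\|_{L^p} \,\lesssim\, \sum_{|j+l-k|\leq C} \|P_j f\|_{L^\infty}\,\|P_l g\|_{L^p} \,\lesssim\, \sum_{|j+l-k|\leq C} \|P_j f\|_{L^p}\,\|P_l g\|_{L^p}.$$
Multiplying by $\langle k \rangle^s$ and using the Peetre-type bound $\langle k \rangle^s \lesssim \langle j \rangle^s + \langle l \rangle^s$ (valid when $|j+l-k|\leq C$), the right-hand side splits into two symmetric truncated discrete convolutions in the indices $(j,l)$.

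In the case $X = M^{p,1}_s$ ($q=1$, $s\geq 0$), taking the $\ell^1$-norm in $k$ and applying Fubini's theorem finishes the proof at once, since for each pair $(j,l)$ only $O(1)$ values of $k$ contribute. For $X = M^{p,q}_s$ with $s > n/q'$, I take the $\ell^q$-norm in $k$ and apply Young's convolution inequality $\ell^q \ast \ell^1 \hookrightarrow \ell^q$; this reduces matters to showing $(\|P_l g\|_{L^p})_{l\in\Z^n} \in \ell^1$, which I obtain by H\"older's inequality:
$$\sum_{l \in \Z^n} \|P_l g\|_{L^p} = \sum_{l \in \Z^n} \langle l\rangle^{-s}\bigl(\langle l\rangle^{s}\|P_l g\|_{L^p}\bigr) \leq \Bigl(\sum_{l \in \Z^n} \langle l\rangle^{-sq'}\Bigr)^{1/q'}\|g\|_{M^{p,q}_s}.$$
The main obstacle is precisely this last step: the entire subtlety lies in converting $\ell^q$-summability with a weight into unweighted $\ell^1$-summability, and it is here, and only here, that the hypothesis $s > n/q'$ (equivalently, $sq' > n$) is used in an essential way to guarantee the convergence of $\sum_l \langle l\rangle^{-sq'}$.
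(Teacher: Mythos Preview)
Your argument is correct and is precisely the standard proof via the frequency-uniform (isometric) decomposition; the paper itself does not supply a proof of this proposition but simply cites \cite{nonom,bao,ambenyi}, and the argument in those references (particularly \cite{bao}) proceeds exactly along the lines you outline. In short, there is nothing to compare: you have reconstructed the cited proof.
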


We end this section with the  following proposition which gives a sufficient condition for a function to be in   $M^{1,1}(\mathbb R^{n})$. See  \cite[p.250]{gro} for a proof. 

\begin{Proposition} \label{ge} Let $L^{2}_{s}(\mathbb R^{n}) =\{f\in L^{2}(\mathbb R^{n}):\int_{\mathbb R^{n}}|f(x)|^{2}(1+|x|)^{2s}<\infty \}.$
If both $f$ and  $\widehat{f}$ are in $L^{2}_{s}(\mathbb R^{n})$ for some $s>n,$ then $f\in M^{1,1}(\mathbb R^{n}).$
\end{Proposition}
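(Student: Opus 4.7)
The plan is to estimate $\|f\|_{M^{1,1}}=\iint |V_g f(x,w)|\,dx\,dw$ directly (for any convenient Schwartz window $g$) by inserting a weight $m(x,w)=(1+|x|+|w|)^s$ via Cauchy--Schwarz, thus trading the $L^1$ norm for a weighted $L^2$ norm which can then be controlled by the hypotheses $f,\widehat f\in L^2_s$. First I would write
\[
\iint |V_g f|\,dx\,dw \leq \Big(\iint |V_g f|^2 (1+|x|+|w|)^{2s}\,dx\,dw\Big)^{1/2}\Big(\iint (1+|x|+|w|)^{-2s}\,dx\,dw\Big)^{1/2},
\]
and observe that the second factor is finite because the integrability threshold of $(1+|y|)^{-2s}$ on $\R^{2n}$ is exactly $2s>2n$, guaranteed by $s>n$.

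Next, since $(1+|x|+|w|)^{2s}\lesssim (1+|x|)^{2s}+(1+|w|)^{2s}$, it suffices to bound
\[
I_1:=\iint |V_g f|^2 (1+|x|)^{2s} \qquad \text{and} \qquad I_2:=\iint |V_g f|^2 (1+|w|)^{2s}.
\]
For $I_1$, I would fix $x$ and use the fact that $w\mapsto V_g f(x,w)$ is the Fourier transform of $f\cdot\overline{T_x g}$, so Plancherel in $w$ gives $\int |V_g f(x,w)|^2\,dw=\int |f(t)|^2|g(t-x)|^2\,dt$. Peetre's inequality $(1+|x|)^s\leq C(1+|t|)^s(1+|t-x|)^s$ then decouples the $t$ and $x$ integrals and yields $I_1\lesssim \|f\|_{L^2_s}^2\|g\|_{L^2_s}^2<\infty$.

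For $I_2$, I would invoke the Fourier invariance identity $|V_g f(x,w)|=|V_{\widehat g}\widehat f(w,-x)|$ already recalled in the excerpt, so that after a change of variables the weight $(1+|w|)^{2s}$ becomes a weight on the translation variable of the STFT of $\widehat f$ with window $\widehat g$. Repeating the $I_1$ argument verbatim with $(f,g)$ replaced by $(\widehat f,\widehat g)$ gives $I_2\lesssim \|\widehat f\|_{L^2_s}^2\|\widehat g\|_{L^2_s}^2<\infty$, since $\widehat g$ is again Schwartz. Combining, $\|f\|_{M^{1,1}}<\infty$, as required.

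The only delicate choice in the argument is the additive weight $(1+|x|+|w|)^s$ for the Cauchy--Schwarz step; the seemingly natural multiplicative weight $(1+|x|)^s(1+|w|)^s$ would instead force $f$ and $\widehat f$ to lie in $L^2_{2s}$ rather than $L^2_s$, which the hypothesis does not provide. The rest is routine STFT bookkeeping together with Plancherel and Peetre's inequality.
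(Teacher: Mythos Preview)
Your argument is correct: the Cauchy--Schwarz step with the additive weight $(1+|x|+|w|)^s$, the splitting $(1+|x|+|w|)^{2s}\lesssim (1+|x|)^{2s}+(1+|w|)^{2s}$, Plancherel in the frequency variable, Peetre's inequality, and the Fourier symmetry of the STFT combine exactly as you describe to give $\|f\|_{M^{1,1}}<\infty$. Note, however, that the paper does not supply its own proof of this proposition; it simply refers the reader to Gr\"ochenig's book \cite[p.~250]{gro}, and the argument you have written is precisely the standard one found there (packaged in that reference as the embedding $M^{2,2}_{v_s}\hookrightarrow M^{1,1}$ for the weight $v_s(x,w)=(1+|x|+|w|)^s$, together with the identification of $M^{2,2}_{v_s}$ via the weighted $L^2$ conditions on $f$ and $\widehat f$).
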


\section{Functions operating on $M^{p,1}$}
\label{tpl}

In this section we mainly prove two results on functions operating on $M^{p,1}(\R^n)$, Theorem \ref{orao} asserts that any function that operates in $M^{p,1}(\mathbb R^{n})$ has to be real analytic, and the converse is given by Theorem \ref{convs} for $p=1$. We start with the following

\begin{Definition}
\label{red}
A complex valued function $F,$ defined on  an open set  $E$ in the plane $\mathbb R^{2}$, is said to be real analytic on $E$, if to every point $(s_{0}, t_{0}) \in E,$ there corresponds an expansion of the form
$$F(s, t)= \sum_{m,n=0}^{\infty} a_{mn} \, (s-s_{0})^{m} \, (t-t_{0})^{n}, \hskip.1in a_{mn} \in \C$$ 
which converges absolutely for all $(s,t)$ in some neighbourhood of $(s_{0}, t_{0}).$

If $E=\R^2$ and if the above series converges absolutely for all $(s,t) \in \R^2$, then $F$ is called real entire. In that case
$F$ has the power series expansion 
\bea\label{pwrexp} F(s, t)= \sum_{m,n=0}^{\infty} a_{mn}\, s^{m} \, t^{n}\eea
that converges absolutely for every $(s,t) \in \R^2. $
\end{Definition}

\begin{Remark}
If $F$ is real analytic at a point $(s_0,t_0) \in \R\times \R$, then the above power series expansion shows that $F$ has an analytic extension $F(s+is', t+ it')$ to an open set in the complex domain $ \C \times \C$ containing $(s_0,t_0)$. Also, if $F$ is real analytic in an open set in $\R^2$, then 
fixing one variable, $F$ is a real analytic function of the other variable.  
\end{Remark}

\begin{Remark}
Note that $F$ is real analytic everywhere on $\R^2$, does not imply that $F$ is real entire. A standard example is the 
function $F(x,y)= \frac{1}{(1+x^2)(1+y^2)}$ which is real analytic everywhere on $\R^2$, but the power series 
expansion  around $(0,0)$, converges only in the unit disc $x^2 + y^2<1$.  
\end{Remark}
\noindent
{\em Notation.} If $F$ is real entire function given by \eqref{pwrexp}, then we denote by  $\tilde{F}$ the function given by the power series expansion
\bea\label{tildenotn} \tilde{F}(s,t) = \sum_{m,n=0}^{\infty} |a_{mn}|\, s^{m} \, t^{n}.\eea
Note that $\tilde{F}$ is real entire if $F$ is real entire. Moreover, as a function on $[0,\infty) \times [0,\infty)$, it is monotonically increasing  with respect to each of the variables $s$ and $t$.

\begin{Theorem}\label{orao}
Suppose that $F$ is a complex valued function on $\R^2.$ 
If $F$ operates in $M^{p,1}(\mathbb R^{n}), 1\leq p \leq \infty $, then $F$ is real analytic on $\R^2$. 
 Moreover, $F(0)=0$ if $1\leq p < \infty $.
\end{Theorem}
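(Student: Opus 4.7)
The overall plan is to exploit the classical Helson--Kahane--Katznelson--Rudin (HKKR) theorem, which asserts that any complex function operating on the Fourier algebra $A(\T^n)$ must be real analytic on its range. The strategy is therefore to use the hypothesis that $F$ operates on $M^{p,1}(\R^n)$ to extract an operating property on $A(\T^n)$ near each point $z_0 \in \R^2$, via a localization argument that exploits the fact that, restricted to functions supported in a fundamental domain of $\T^n$, the modulation space $M^{p,1}(\R^n)$ is comparable to $A(\T^n)$.

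I would first dispose of the condition $F(0)=0$ when $1\leq p<\infty$: applying the operating hypothesis to $f\equiv 0 \in M^{p,1}$ shows that the constant $F(0)$ belongs to $M^{p,1}$, but nonzero constants have infinite $M^{p,1}$-norm (their STFT with a Schwartz window is independent of $x$, so the inner $L^p$ integral diverges). For the real analyticity claim, fix $z_0 \in \R^2$ and a bump $\phi \in C_c^\infty(\R^n)$ with $\phi\equiv 1$ on a small ball $B_1$ contained in a slightly larger ball $B_2 \subset (0,1)^n$. The first key fact is the embedding $A(\T^n)\hookrightarrow M^{p,1}$ via $\phi$: for $g(x)=\sum_{k\in\Z^n} c_k e^{2\pi i k\cdot x}\in A(\T^n)$, the product $\phi g=\sum_k c_k M_k \phi$ satisfies
\[ \|\phi g\|_{M^{p,1}} \leq \sum_k |c_k|\,\|M_k\phi\|_{M^{p,1}} = \|\phi\|_{M^{p,1}}\,\|g\|_{A(\T^n)} \]
by the modulation invariance of the $M^{p,1}$-norm recorded in Section \ref{prlm}. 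Thus $f = z_0\phi + \phi g\in M^{p,1}$, and on $B_1$ one has $F(f)(x)=F(z_0+g(x))$.

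The second key fact is extraction in the opposite direction. The operating hypothesis gives $F(f)\in M^{p,1}$, and multiplying by a further cutoff $\psi\in C_c^\infty(B_1)$ (using the algebra property of $M^{p,1}$, Proposition \ref{apm}) one obtains $\psi\cdot F(z_0+g)\in M^{p,1}$, compactly supported in $(0,1)^n$. The core lemma is that such localized functions belong to $A(\T^n)$ after periodic identification, with the estimate $\sum_{k\in\Z^n}|\hat h(k)| \lesssim \|h\|_{M^{p,1}}$ whenever $h\in M^{p,1}$ is supported in $(0,1)^n$; this can be proved via a sampling inequality of Plancherel--Polya type together with the local Wiener-amalgam structure of $M^{p,1}$. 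Combining this with a smooth partition of unity on $\T^n$ subordinate to translated copies of $B_1$, and using the translation invariance of both spaces, yields $F(z_0+g)\in A(\T^n)$ for every $g\in A(\T^n)$ with $\|g\|_{A(\T^n)}$ sufficiently small. The local form of HKKR then forces $F$ to be real analytic at $z_0$, and since $z_0\in\R^2$ is arbitrary, $F$ is real analytic on $\R^2$.

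The main obstacle I anticipate is the extraction lemma: converting an $M^{p,1}$-norm bound on a compactly supported function into $\ell^1$-summability of samples of its Fourier transform on the integer lattice. This is the delicate harmonic-analytic bridge between the continuous time-frequency setting and the discrete torus, and it is where the comparison of the two function spaces actually lives. By contrast, the embedding direction is an immediate consequence of modulation invariance, and the final appeal to HKKR is classical.
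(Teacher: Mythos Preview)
Your proposal is correct and follows essentially the same route as the paper: reduce to the Helson--Kahane--Katznelson--Rudin theorem on $A(\T^n)$ via the two-way comparison between $M^{p,1}(\R^n)$ and $A(\T^n)$ for functions localized in a fundamental domain, together with a partition of unity on $\T^n$.

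Two minor differences are worth noting. First, your localization at an arbitrary $z_0\in\R^2$ and appeal to the \emph{local} form of HKKR is unnecessary: your own embedding step works for every $g\in A(\T^n)$ (no smallness is used), so taking $z_0=0$ already yields $F(g)\in A(\T^n)$ for all $g\in A(\T^n)$, and the global HKKR with $E=\R^2$ applies directly---this is how the paper proceeds. Second, the extraction lemma you correctly flag as the crux (compactly supported $M^{p,1}$ functions periodize into $A(\T^n)$) is quoted in the paper as Proposition~B.1 of B\'enyi--Oh rather than proved; your Plancherel--P\'olya/Wiener-amalgam sketch is a reasonable route to it. Your embedding argument via modulation invariance is in fact slightly more direct than the paper's, which passes to the Fourier side and uses the $M^{1,1}$ invariance under $\mathcal{F}$ together with the identification of $\widehat{f^\ast}$ as a discrete measure.
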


Before proving this theorem, we discuss some interesting consequences of this result. 
First notice that for $\alpha > 0$, the complex function $$F(z) = |z|^\alpha z= (x^2 + y^2)^{\frac{\alpha}{2}} (x+iy),$$ 
as a mapping from $\R^2 \to \R^2$
may be written as
$$F(x,y)= \left( (x^2 + y^2)^{\alpha/2} x, (x^2 + y^2)^{\alpha/2} y\right). $$ Note that the functions $(x,y) \mapsto (x^2 + y^2)^{\alpha/2} x$ and
$(x,y) \mapsto (x^2 + y^2)^{\alpha/2} y$ are real analytic at zero only if $\alpha \in 2 {\mathbb N}$.
Thus the above theorem answers negatively, the open question raised in \cite{ruzha} 
regarding the validity of an inequality of the form $$\| |u|^\alpha u \|_{M^{p,1}} \lesssim \|u\|_{M^{p,1}}^{\alpha +1},$$ 
for all $u \in {M^{p,1}}(\R^n)$, for  $\alpha \in (0, \infty)\setminus 2 \mathbb N.$ In fact we have the following
\begin{cor}
\label{op}
 There exists $f\in M^{p,1} (\R^n)$ such that $f|f|^{\alpha} \notin M^{p,1}(\R^n)$, for any $\alpha \in (0, \infty) \setminus  2\mathbb N.$ 
\end{cor}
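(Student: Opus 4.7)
The plan is to deduce the corollary from Theorem \ref{orao} by contraposition. Suppose for contradiction that for every $f \in M^{p,1}(\R^n)$ we have $f|f|^{\alpha} \in M^{p,1}(\R^n)$. Define $F : \R^2 \to \C$ by
\[
F(x,y) := (x^2+y^2)^{\alpha/2}(x+iy),
\]
so that, identifying $\R^2$ with $\C$ via $z = x+iy$, we have $F(z) = z|z|^{\alpha}$. Our hypothesis is precisely that $F$ operates in $M^{p,1}(\R^n)$ in the sense of the definition in the introduction. Theorem \ref{orao} would then force $F$ to be real analytic on all of $\R^2$. Our task is therefore to exhibit a single point in $\R^2$ where $F$ fails to be real analytic.

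The natural candidate is the origin, since this is where the radial factor $(x^2+y^2)^{\alpha/2}$ loses regularity when $\alpha \notin 2\mathbb N$. I would use the elementary fact that real analyticity of $F$ at $(0,0)$ forces real analyticity in each variable separately when the other is held fixed. Restricting to $y=0$, the component functions of $F$ reduce to
\[
x \longmapsto x\,|x|^{\alpha} \quad\text{and}\quad x \longmapsto 0,
\]
so it suffices to show that $g(x) := x\,|x|^{\alpha}$ is not real analytic at $x=0$ whenever $\alpha \in (0,\infty)\setminus 2\mathbb N$.

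The argument for the non-analyticity of $g$ splits naturally according to $\alpha$: (i) if $\alpha$ is not an integer, then $g$ fails to be $C^{k}$ at $0$ for any integer $k > \alpha+1$, since the derivative of sufficiently high order blows up like $|x|^{\alpha-k+1}$; (ii) if $\alpha$ is a positive odd integer, then $g(x) = x^{\alpha+1}\,\mathrm{sgn}(x)\cdot\mathrm{sgn}(x) = \ldots$ produces a sign change in a sufficiently high derivative, so the one-sided Taylor expansions at $0$ from the right and from the left do not agree (concretely, for $\alpha = 2m+1$, the derivative of order $\alpha+2$ has a jump at the origin). In either case $g$ cannot be represented by an absolutely convergent power series in any neighbourhood of $0$, so $g$ is not real analytic at $0$.

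Consequently $F$ is not real analytic at $(0,0)$, and therefore not real analytic on $\R^2$. By the contrapositive of Theorem \ref{orao}, $F$ does not operate in $M^{p,1}(\R^n)$, yielding the required $f \in M^{p,1}(\R^n)$ with $f|f|^{\alpha} \notin M^{p,1}(\R^n)$. The main (minor) obstacle is handling the odd-integer case of $\alpha$ cleanly; once that is disposed of, the rest is a direct appeal to Theorem \ref{orao}.
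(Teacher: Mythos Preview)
Your argument is correct and follows exactly the paper's approach: assume $F(z)=z|z|^{\alpha}$ operates in $M^{p,1}(\R^n)$, invoke Theorem~\ref{orao}, and derive a contradiction from the non-analyticity of $F$ at the origin; the paper simply asserts this non-analyticity (just before the corollary) while you supply the one-variable restriction $g(x)=x|x|^{\alpha}$ and a case split. Two cosmetic slips in your odd-integer case: the expression ``$x^{\alpha+1}\,\mathrm{sgn}(x)\cdot\mathrm{sgn}(x)$'' should read $x^{\alpha+1}\,\mathrm{sgn}(x)$, and it is the derivative of order $\alpha+1$ (not $\alpha+2$) that jumps at the origin --- neither affects the validity of the argument.
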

\begin{proof}
If possible, suppose that $F(f)\in M^{p,1}(\mathbb R^{n})$  for all $f\in M^{p,1}(\mathbb R^{n})$, where $F:\C (\approx \R^{2}) \to \mathbb C$ given by 
 $F(z)= z|z|^{\alpha} = x(x^2 +y^2)^{\alpha/2} +iy (x^2 +y^2)^{\alpha/2},$  for  $\alpha \in (0,\infty) \setminus 2\mathbb N.$ But then by Theorem \ref{orao}, $F$ must be real analytic on $\mathbb R^{2},$ which is absurd.
\end{proof}

\begin{cor}
\label{cor1}
If $f\in M^{p,1} (\R)$ then $|f|$ need not be in $M^{p,1}(\R)$. Conversely 
$|f| \in M^{p,1}(\R)$ does not imply that $f\in M^{p,1} (\R)$.\end{cor}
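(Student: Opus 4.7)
My plan is to treat the two parts separately: the first follows immediately from Theorem \ref{orao}, while the second requires a concrete example combined with the observation that every element of $M^{p,1}(\R)$ is continuous.

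For the first assertion, I would apply Theorem \ref{orao} to $F\colon \C \to \C$ given by $F(z) = |z|$, or equivalently $F(s,t) = \sqrt{s^2+t^2}$ in real coordinates. Here $F(0,0) = 0$, but $F$ fails to be real analytic at the origin: its first partial derivatives $s/\sqrt{s^2+t^2}$ and $t/\sqrt{s^2+t^2}$ do not even admit well-defined values there, so $F$ is not $C^1$ at $(0,0)$. By the contrapositive of Theorem \ref{orao}, $F$ cannot operate in $M^{p,1}(\R)$, hence there must exist $f \in M^{p,1}(\R)$ with $|f| = F(\mathrm{Re}\, f, \mathrm{Im}\, f) \notin M^{p,1}(\R)$.

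For the converse, I would take a positive Schwartz function, say $g(x) = e^{-x^2}$, and set
\[ f(x) := \mathrm{sgn}(x)\, g(x). \]
Then $|f| = g$ is Schwartz, hence lies in $M^{p,1}(\R)$, whereas $f$ has a jump discontinuity at $x = 0$. To conclude that $f \notin M^{p,1}(\R)$, I would invoke the continuous inclusion $M^{p,1}(\R) \hookrightarrow C_b(\R)$, valid for every $1 \le p \le \infty$. This embedding can be extracted directly from the STFT inversion formula
\[
f(t) \;=\; \frac{1}{\|g_0\|_2^2}\int_{\R^2} V_{g_0}f(x,\omega)\, (M_\omega T_x g_0)(t)\, dx\, d\omega
\]
with a fixed Schwartz window $g_0$: H\"older's inequality in the $x$-variable (with conjugate exponents $p,p'$) shows that the absolute value of the integrand is dominated, uniformly for $t$ in any compact set $K$, by the $L^1(dx\,d\omega)$ function $|V_{g_0}f(x,\omega)|\, \sup_{\tau \in K}|g_0(\tau - x)|$, whose total integral is at most a constant multiple of $\|f\|_{M^{p,1}}$; dominated convergence then yields continuity of $t \mapsto f(t)$.

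The main obstacle is establishing the embedding $M^{p,1} \hookrightarrow C_b$ uniformly in $p$, because for $p = \infty$ one cannot approximate by Schwartz functions and fall back on density. The inversion-formula estimate above, however, handles every value of $p$ simultaneously, and once it is in hand both assertions of the corollary are immediate.
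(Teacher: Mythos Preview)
Your argument is correct. The first part is identical to the paper's: both apply Theorem~\ref{orao} to $F(z)=|z|$, which fails to be real analytic at the origin. For the converse, the paper follows exactly the same strategy---exhibit a discontinuous $f$ whose modulus lies in $M^{p,1}$, then invoke the inclusion $M^{p,1}(\R)\subset C(\R)$---but with a different concrete function: it takes the odd extension of the hat function, $f(x)=\mathrm{sgn}(x)\,(1-|x|)_+$, so that $|f|$ is the triangle function, and then appeals to Proposition~\ref{ge} (both $|f|$ and $\widehat{|f|}=\bigl(\tfrac{\sin\pi w}{\pi w}\bigr)^2$ lie in $L^2_s$ for some $s>1$) to place $|f|$ in $M^{1,1}\subset M^{p,1}$. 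Your choice $f(x)=\mathrm{sgn}(x)\,e^{-x^2}$ is arguably cleaner, since $|f|\in\mathcal S(\R)$ needs no justification. Conversely, the paper simply asserts $M^{p,1}(\R)\subset C(\R)$, whereas you supply a self-contained proof via the inversion formula and dominated convergence; this extra care is warranted precisely because, as you note, density of $\mathcal S$ fails when $p=\infty$.
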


\begin{proof}
The function  $F(z) =|z| =(x^2 +y^2)^{ 1/2 }$ is not real analytic on $\C \approx \R^2$, which shows the first part.
For the converse, consider the function $f:\mathbb R \to \mathbb R$ given by 
$$ f(x)=\begin{cases}
1-x & \text{if}  \ 0\leq x <1,\\
-1+ x, & \text {if} \  -1\leq x <0,\\
0, & \text{if}\  |x|\geq 1.
\end {cases}$$
Note that $f$ is discontinuous and hence does not belong to $M^{p,1} (\mathbb R);$ as $M^{p,1}(\mathbb R)\subset C(\mathbb R).$ But $|f|=(1-|x|)_+$, which is the triangle function,  with Fourier transform 
$\left(\frac{\sin(\pi w)}{\pi w}\right)^{2}$. Thus by  Proposition \ref{ge}, $|f| \in M^{1,1}(\mathbb R) \subset M^{p,1}(\mathbb R)$. 
\end{proof}

Now we proceed to prove Theorem \ref{orao}. Our proof is motivated  by a classical result of
Helson, Kahane, Katznelson and Rudin, for abstract Fourier algebras \cite[p.156]{rudin}, see also, \cite[Theorem 6.9.2]{rb}. Here we restate it for the 
special case of the Fourier algebra $A(\mathbb T^{n})$,  the space  of  
functions on the $n-$torus $\T^n$ having absolutely convergent Fourier series:
$$A(\mathbb T^{n})=\{f:\mathbb T^{n}\to \mathbb C:\sum_{m\in \mathbb Z^{n}} |\hat{f}(m)|< \infty \},$$
where $\hat{f}(m)=\int_{\T^n}f(x)e^{-2\pi i m\cdot x} dx$, the $m$th  Fourier coefficient of $f$. $A(\mathbb T^{n})$ is a Banach algebra under pointwise addition and  multiplication, with respect to the norm
$$\|f\|_{A(\mathbb T^{n})}:=\sum_{m\in \mathbb Z^{n}}|\hat{f}(m)|.$$

\begin{Theorem}[Helson-Kahane-Katznelson-Rudin] \label{HKKR}
Let $F$ be a complex function defined on an  open set $E \subset \R^2$ containing the origin.  If $F$ operates in the Fourier algebra $A(\T^n)$, i.e. $T_F(A(\T^n)) \subset A(\T^n)$, then $F$ is analytic on $E$.
\end{Theorem}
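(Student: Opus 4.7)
The plan is first to reduce to the one-dimensional case, then to upgrade ``$F$ operates'' to a quantitative bound on the superposition operator $T_F$, and finally to exploit circular test functions in $A(\T)$ to extract a convergent power series for $F$ at each point of $E$.

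\textbf{Reduction to $n=1$.} The lift $\iota:A(\T)\to A(\T^n)$, $g(x_1)\mapsto \tilde g(x_1,\ldots,x_n):=g(x_1)$, is an isometric embedding since $\widehat{\tilde g}(k_1,\ldots,k_n)=\widehat g(k_1)\,\mathbf 1_{k_2=\cdots=k_n=0}$. If $F$ operates in $A(\T^n)$ then $F\circ\tilde g\in A(\T^n)$, and $F\circ\tilde g$ depends only on $x_1$, so $F\circ g\in A(\T)$. It therefore suffices to prove the statement for $n=1$.

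\textbf{Local boundedness of $T_F$.} Using that $A(\T)\hookrightarrow C(\T)$ contains constants and bump functions, a first argument forces $F$ to be continuous on $E$: otherwise a sequence of bumped constants would yield a bounded sequence in $A(\T)$ whose images leave every bounded set of $C(\T)$. A closed-graph/Baire-category argument then gives, for each $R>0$ and each compact $K\subset E$, a constant $M=M(R,K)$ with
$$\|F\circ f\|_{A(\T)}\le M\qquad\text{whenever}\qquad\|f\|_{A(\T)}\le R,\ f(\T)\subset K.$$

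\textbf{Extraction of Taylor coefficients.} After translating ($G(z)=F(z_0+z)-F(z_0)$), it suffices to prove real analyticity at $0\in E$, assuming $F(0)=0$. For small $\rho>0$ take the test function $\phi_\rho(x)=\rho e^{2\pi i x}$, of $A$-norm $\rho$, whose range is the circle $\{|z|=\rho\}$. The Fourier coefficients $c_l(\rho)$ of $\theta\mapsto F(\rho e^{2\pi i\theta})$ then satisfy $\sum_l |c_l(\rho)|=\|F\circ\phi_\rho\|_{A(\T)}\le M(\rho)$. Writing the sought expansion $F(x+iy)=\sum_{j,k\ge 0}a_{jk}x^jy^k$ and expanding $\cos^j(2\pi\theta)\sin^k(2\pi\theta)$ in complex exponentials formally gives, for each $l$,
$$c_l(\rho)=\sum_{\substack{n\ge|l|\\ n\equiv l\,(\mathrm{mod}\,2)}}\rho^{\,n}\sum_{j+k=n}\alpha_{jk,l}\,a_{jk},$$
for explicit combinatorial constants $\alpha_{jk,l}$. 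On each homogeneous level $j+k=n$ the relevant square system (indexed by $|l|\le n$, $l\equiv n\ (\mathrm{mod}\,2)$, which has exactly $n+1$ values) is invertible, so one can solve for the $a_{jk}$ in terms of the $c_l$. Combined with $|c_l(\rho)|\le M(\rho)$, this yields Cauchy-type bounds $|a_{jk}|\le C_n M(\rho)/\rho^{j+k}$ for all small $\rho>0$, forcing $\sum|a_{jk}|r^{j+k}<\infty$ for some $r>0$ and hence real analyticity of $F$ at $0$; translating back gives real analyticity on all of $E$.

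\textbf{Main obstacle.} The delicate step is the rigorous justification of the expansion of $c_l(\rho)$ as a power series in $\rho$: a priori $F$ is only continuous, so one cannot differentiate under the integral sign to read off the coefficients. Because $F$ is only real analytic (not holomorphic), monomials $x^jy^k$ of the same total degree $n$ do not separate into distinct Fourier modes on a circle but mix into the modes $l\equiv n\,(\mathrm{mod}\,2)$ with $|l|\le n$. Disentangling them requires either evaluating $c_l$ at several radii $\rho_1<\cdots<\rho_N$ and inverting a Vandermonde-type system, or bootstrapping smoothness of $F$ from the boundedness of $T_F$ before differentiating. This interpolation/smoothing step is the technical heart of the Helson--Kahane--Katznelson--Rudin argument.
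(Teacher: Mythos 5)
First, a point of comparison: the paper does not prove this statement at all. It is quoted as the classical Helson--Kahane--Katznelson--Rudin theorem, with references to the 1959 Acta Math.\ paper and to Rudin's \emph{Fourier Analysis on Groups} (Theorem 6.9.2), so your attempt has to be measured against that classical argument, not against anything in the present text. Your reduction to $n=1$ via the lift $g(x_1)\mapsto g(x_1)$ is correct, and continuity of $F$ together with a Baire-category boundedness statement is indeed how the classical proof begins.

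However, there are genuine gaps. (i) The bound you assert --- one constant $M(R,K)$ valid for \emph{all} $f$ with $\|f\|_{A(\T)}\le R$ and $f(\T)\subset K$ --- does not follow from a closed-graph or category argument as stated: $T_F$ is nonlinear, and Baire only gives a bound on some small ball around some particular $f_0$ inside the complete metric space $\{\|f\|_A\le R,\ f(\T)\subset K\}$; upgrading this to balls around arbitrary constants (which is what your circle test functions need) already requires the translation arguments that are part of the HKKR proof. (ii) Much more seriously, the coefficient-extraction step is circular: you \emph{write down} the sought expansion $F=\sum a_{jk}x^jy^k$ and then solve for the $a_{jk}$ from the circle data $c_l(\rho)$, but the existence of such an expansion --- indeed even the differentiability of $F$ needed to define candidate coefficients --- is exactly the conclusion of the theorem. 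Since each $c_l(\rho)$ mixes all homogeneous degrees $n\ge|l|$ with $n\equiv l\ (\mathrm{mod}\ 2)$, no finite Vandermonde inversion over several radii separates them; and even granting smoothness, your bound $|a_{jk}|\le C_nM(\rho)\rho^{-(j+k)}$ is useless unless the constants $C_n$ grow at most geometrically, which you do not control. Note also that circles centred at the point of interest alone carry almost no information (for a radial $F(z)=G(|z|)$ with $G$ merely continuous, $F\circ\phi_\rho$ is constant), so the analyticity must come from coupling the data over many centres and radii --- precisely the interpolation/smoothing step you leave open and label the ``main obstacle.'' That obstacle is the actual heart of the theorem: the classical proof does not invert circle data but tests $F$ on richer families (translates $a+\varepsilon\cos x$ and trigonometric polynomials with many nearly independent frequencies) and passes to the limit to obtain factorial-type bounds on derivatives or difference quotients of $F$ without any a priori smoothness. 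As written, the proposal is an outline whose decisive step is missing, so it does not establish the theorem.
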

We use the above theorem with $E= \R^2$. We also need the following
result of B\'enyi-Oh, see  \cite[Proposition B.1]{Tbenyi}.
\begin{Proposition}\label{B1}
 Let $f\in M^{p,1}(\mathbb R^n), 1\leq p \leq \infty$ and $\phi$  a smooth function supported on
 $[0,1)^n$. Then $\phi f \in A( \T ^n) $ and satisfies the inequality $$\| \phi f \|_{ A(\T^n) } \lesssim  \| f \|_{M^{p,1} }.$$
\end{Proposition}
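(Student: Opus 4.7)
The plan is to translate the $A(\T^n)$-norm of $\phi f$ into a sum over the integer lattice of Euclidean Fourier-transform values, and then bound that sum using a standard frequency-side equivalent norm for $M^{p,1}$. First, since $\phi \in C_c^\infty$ with $\operatorname{supp}\phi \subset [0,1)^n$, and $f \in M^{p,1}(\R^n) \hookrightarrow M^{\infty,1}(\R^n) \hookrightarrow L^\infty(\R^n)$, the product $\phi f$ is a compactly supported integrable function on $\R^n$, and its $m$-th Fourier coefficient as a function on $\T^n = \R^n/\Z^n$ is exactly $\widehat{\phi f}(m)$. Hence
\[
\|\phi f\|_{A(\T^n)} = \sum_{m\in\Z^n} |\widehat{\phi f}(m)|,
\]
and the problem reduces to bounding this sum by a constant multiple of $\|f\|_{M^{p,1}}$.

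For the bound I would use the well-known ``uniform frequency decomposition'' equivalent norm on $M^{p,1}(\R^n)$: fix a smooth partition of unity $\{\sigma_k\}_{k\in\Z^n}$ on $\R^n$ with $\sigma_k(\xi)=\sigma_0(\xi-k)$, $\sigma_0 \in C_c^\infty$ supported in a fixed bounded cube $Q$ around $0$; then
\[
\|f\|_{M^{p,1}} \asymp \sum_{k\in\Z^n} \|\sigma_k \widehat{f}\|_{L^p(\R^n)},
\]
which can be obtained from the STFT definition by picking a Schwartz window whose Fourier transform is suitably adapted to $\sigma_0$ and discretizing the $w$-integral. Writing $\widehat{\phi f} = \widehat{\phi} \ast \widehat{f}$ and splitting $\widehat{f} = \sum_k \sigma_k \widehat{f}$, Hölder's inequality with exponents $(p,p')$ gives
\[
|\widehat{\phi}\ast(\sigma_k \widehat{f})(m)| \leq \|\widehat{\phi}(m-\cdot)\sigma_k\|_{L^{p'}} \|\sigma_k \widehat{f}\|_{L^p} \lesssim_{N} (1+|m-k|)^{-N}\,\|\sigma_k \widehat{f}\|_{L^p},
\]
because $\widehat{\phi}$ is Schwartz (as $\phi \in C_c^\infty$) and $\operatorname{supp}\sigma_k \subset k+Q$ is a bounded set of fixed size.

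Summing first in $m$ with $N > n$ and then in $k$ yields
\[
\sum_{m\in\Z^n} |\widehat{\phi f}(m)| \leq \sum_{k\in\Z^n} \sum_m |\widehat{\phi}\ast(\sigma_k \widehat{f})(m)| \lesssim \sum_{k\in\Z^n} \|\sigma_k \widehat{f}\|_{L^p} \asymp \|f\|_{M^{p,1}},
\]
which is the desired inequality. The step I expect to need the most care is ensuring uniform-in-$p$ constants, in particular at the endpoints $p=1$ and $p=\infty$; this is fine because the $L^{p'}$-norm of $\widehat{\phi}(m-\cdot)\sigma_k$ over the bounded set $k+Q$ is dominated by the sup-norm times $|Q|^{1/p'} \leq \max(1,|Q|)$, and the Schwartz decay of $\widehat{\phi}$ dominates everything else. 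Should one wish to bypass the partition-of-unity equivalent norm, essentially the same argument can be run directly on the STFT by choosing a Schwartz window $g$ with $\widehat{g}$ compactly supported in a small cube and discretizing the outer $w$-integral of the $M^{p,1}$-norm.
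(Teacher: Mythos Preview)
The paper does not supply its own proof of this proposition; it is quoted from B\'enyi--Oh (their Proposition~B.1), so your attempt has to stand on its own merits.

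Your overall architecture is right, but the equivalent norm you invoke for $M^{p,1}$ is incorrect when $p\neq 2$. The uniform frequency-decomposition characterization reads
\[
\|f\|_{M^{p,1}}\ \asymp\ \sum_{k\in\Z^n}\bigl\|\mathcal F^{-1}(\sigma_k\widehat f\,)\bigr\|_{L^p(\R^n)},
\]
with the $L^p$ norm taken on the \emph{physical} side, not $\sum_k\|\sigma_k\widehat f\|_{L^p}$ as you wrote. A one-line counterexample at $p=1$: take $n=1$ and $\widehat f=\chi_{[0,1]}$; then your sum is finite (only finitely many $k$ contribute), yet $f$ is a modulated sinc and $f\notin L^1\supset M^{1,1}$. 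At $p=\infty$ the situation is worse: $f\equiv 1\in M^{\infty,1}$, but $\widehat f=\delta_0$ is not a function and $\|\sigma_0\widehat f\|_{L^\infty}$ is undefined. Since your H\"older step produces exactly $\|\sigma_k\widehat f\|_{L^p}$ on the right, the final ``$\asymp \|f\|_{M^{p,1}}$'' does not hold and the chain breaks.

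The repair is routine once the correct pieces $\square_k f:=\mathcal F^{-1}(\sigma_k\widehat f)$ are used. Writing $g_k:=M_{-k}\square_k f$, which is band-limited to the fixed cube $Q$, one has $\widehat{\phi\,\square_k f}(m)=\widehat{\phi\,g_k}(m-k)$; integration by parts on $\operatorname{supp}\phi$ together with Bernstein's inequality $\|\partial^\gamma g_k\|_{L^p}\lesssim_{Q,\gamma}\|\square_k f\|_{L^p}$ gives $|\widehat{\phi\,\square_k f}(m)|\lesssim_N(1+|m-k|)^{-N}\|\square_k f\|_{L^p}$, after which your double-sum argument goes through verbatim. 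A shorter alternative is to use the embedding $M^{p,1}\hookrightarrow M^{\infty,1}$ and the identity $\widehat{\phi f}(m)=V_{\bar\phi}f(0,m)$, reducing everything to a discretization of the $w$-integral in the $M^{\infty,1}$ norm.
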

Note that the above estimate is stated in \cite{Tbenyi} in terms of the Fourier-Lebsgue spaces,     ${\mathcal F}L^{s,q}(\T^n)$, which coincides with $A(\T^n)$; for $s=0$ and $q=1$, see also \cite[Proposition 2.1, Remark 4.2]{moT}. The next proposition is the main ingredient in the proof of Theorem \ref{orao}.
We first prove the following result.

\begin{Lemma}\label{ftmre}
Let $f$ be a periodic function on $\R^n$ with absolutely convergent Fourier series. Then $f$ is a tempered distribution on $\R^n$ and the Fourier transform of $f$ is the discrete measure $\mu= \sum_{m \in \mathbb Z^{n}} \hat{f}(m) \, \delta_{m}$, where $\hat{f}(m)$ denotes the $m$th Fourier coefficient of $f$, and $\delta_m$ the Dirac mass at $m \in \R^n.$ 
\end{Lemma}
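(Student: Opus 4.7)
The plan is to first verify that $f$ is a tempered distribution and then compute its Fourier transform by testing against Schwartz functions and using Fourier inversion on each exponential mode.

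First I would observe that the assumption $\sum_{m \in \Z^n} |\hat{f}(m)| < \infty$ implies that the Fourier series
\[
f(x) = \sum_{m \in \Z^n} \hat{f}(m)\, e^{2\pi i m \cdot x}
\]
converges absolutely and uniformly on $\R^n$, so $f$ is continuous and bounded with $\|f\|_\infty \le \sum_m |\hat{f}(m)|$. In particular $f \in L^\infty(\R^n) \hookrightarrow \mathcal{S}'(\R^n)$, so $f$ defines a tempered distribution via the usual pairing $\langle f, \varphi \rangle = \int_{\R^n} f(x)\varphi(x)\,dx$ for $\varphi \in \mathcal{S}(\R^n)$.

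Next I would compute $\hat{f}$ as a distribution. For $\varphi \in \mathcal{S}(\R^n)$,
\[
\langle \hat{f}, \varphi \rangle = \langle f, \hat{\varphi} \rangle = \int_{\R^n} \left(\sum_{m \in \Z^n} \hat{f}(m)\, e^{2\pi i m \cdot x}\right) \hat{\varphi}(x)\, dx .
\]
The integrand is dominated in absolute value by $\bigl(\sum_m |\hat{f}(m)|\bigr) |\hat{\varphi}(x)|$, which is integrable since $\hat{\varphi} \in \mathcal{S}(\R^n) \subset L^1(\R^n)$. Hence Fubini's theorem permits interchanging the sum and the integral to give
\[
\langle \hat{f}, \varphi \rangle = \sum_{m \in \Z^n} \hat{f}(m) \int_{\R^n} e^{2\pi i m \cdot x} \hat{\varphi}(x)\, dx = \sum_{m \in \Z^n} \hat{f}(m)\, \varphi(m),
\]
where the last equality uses Fourier inversion, $(\hat{\varphi})^\vee(m) = \varphi(m)$. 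The right side is precisely $\langle \mu, \varphi \rangle$ for the measure $\mu = \sum_{m \in \Z^n} \hat{f}(m)\, \delta_m$; note the series defining $\langle \mu, \varphi\rangle$ converges absolutely since $|\varphi(m)| \le \|\varphi\|_\infty$ and $\hat f \in \ell^1(\Z^n)$.

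I do not expect any real obstacles in this argument; the only point requiring a little care is justifying the interchange of summation and integration, which is handled cleanly by the absolute convergence of the Fourier series together with $\hat\varphi \in L^1(\R^n)$. Once this is in place the identification $\hat{f} = \mu$ follows at once from the duality pairing.
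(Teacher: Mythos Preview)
Your proof is correct and follows essentially the same route as the paper: both establish that $f$ is bounded (hence tempered) from the absolute convergence of the Fourier series, pair $f$ with $\hat\varphi$, interchange sum and integral, and use Fourier inversion to obtain $\sum_m \hat f(m)\varphi(m)=\langle\mu,\varphi\rangle$. Your version is slightly more explicit in justifying the interchange via the integrable dominant $(\sum_m|\hat f(m)|)|\hat\varphi(x)|$, but otherwise the arguments coincide.
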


\begin{proof}

Note that  $f$  is continuous on the torus $\T^n$ since the Fourier series is absolutely convergent. Thus $f$ viewed as a periodic function on $\R^n$, is bounded and hence defines a tempered distribution.   

We have $f(x) = \sum_{m \in \Z^n} \hat{f}(m) \, e^{2 \pi i m \cdot x} $ for all $x \in \R^n$.
Thus for $\varphi \in {\mathcal S}(\R^n)$, 
$$ \int_{\R^n} f \, \widehat{\varphi} = \sum_{m \in \Z^n} \hat{f}(m) \int_{\R^n} e^{2 \pi i m \cdot x} \, \widehat{\varphi} (x) \, dx =
\sum_{m \in \Z^n} \hat{f}(m) \, \varphi(m).$$
Writing $\varphi(m) = \delta_m(\varphi)$, this shows that
$\langle \widehat{f}, \varphi \rangle = \left\langle \sum_{m \in \Z^n} \hat{f}(m) \, \delta_m, \varphi \right\rangle$ for all $\varphi \in  {\mathcal S}(\R^n)$.
Thus the Fourier transform of $f$ as a tempered distribution,  is given by $\widehat{f}= \sum_{m \in \Z^n} \hat{f}(m) \, \delta_{m}$ as asserted. \end{proof}

Note that the $\mu$ defined above is a complex Borel measure on $\mathbb R^{n},$ with total variation norm $\| \mu\|=|\mu|(\mathbb R^{n})= \sum_{m\in \mathbb Z^{n}} |\hat{f}(m)|<\infty.$ 

\begin{Proposition} \label{Prop3.3}
If $F$ operates in $M^{p,1}(\R^n), (1\leq p \leq \infty)$, then $F$ operates in $A(\mathbb T^n).$
\end{Proposition}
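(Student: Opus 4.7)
The plan is to embed $f \in A(\T^n)$ into $M^{p,1}(\R^n)$ by multiplying with a Schwartz window, apply the hypothesis that $F$ operates on $M^{p,1}$, and then transfer back to $A(\T^n)$ via Proposition~\ref{B1}, using a partition of unity on $\T^n$ to upgrade a local statement to a global one.

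First, for any $f \in A(\T^n)$ viewed as a bounded continuous periodic function on $\R^n$ and any $\psi \in \mathcal{S}(\R^n)$, I expand $f$ in its absolutely convergent Fourier series to write
\[
\psi f \;=\; \sum_{m \in \Z^n} \hat f(m)\, M_m \psi .
\]
Equivalently, Lemma~\ref{ftmre} identifies $\widehat{\psi f} = \widehat{\psi} \ast \sum_{m\in\Z^n} \hat f(m)\, \delta_m$ and one reads off the same decomposition after applying the inverse Fourier transform. The identity $V_g(M_m\psi)(x,\xi) = V_g\psi(x,\xi-m)$, together with translation invariance of the $L^{p,1}$ norm on $\R^n_x\times\R^n_\xi$ in the $\xi$-variable (the weight is trivial here), shows that $M_m$ acts as an isometry on $M^{p,1}(\R^n)$. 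Hence the above series converges absolutely in $M^{p,1}$, and
\[
\|\psi f\|_{M^{p,1}} \;\leq\; \|\psi\|_{M^{p,1}}\,\|f\|_{A(\T^n)}.
\]

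Next, choose $\psi \in C_c^\infty(\R^n)$ with $\psi \equiv 1$ on a neighbourhood of $[0,1]^n$, so that $\psi f \in M^{p,1}(\R^n)$; the hypothesis on $F$ then gives $F(\psi f) \in M^{p,1}(\R^n)$. For any $\phi \in C_c^\infty(\R^n)$ supported in the interior of $[0,1]^n$, the identity $\phi\cdot F(\psi f) = \phi\cdot F(f)$ holds pointwise, since $\psi\equiv 1$ on $\operatorname{supp}\phi$. Proposition~\ref{B1} applied to $F(\psi f)$ therefore delivers $\phi F(f) \in A(\T^n)$.

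To pass from this local conclusion to $F(f) \in A(\T^n)$, I fix a finite smooth partition of unity $\{\phi_j\}_{j=1}^N$ on $\T^n$ so arranged that, for each $j$, the support of $\phi_j$ lies inside some translated fundamental domain $a_j+[0,1)^n$. For each $j$ the shifted function $x\mapsto f(x+a_j)$ is again in $A(\T^n)$ with the same norm, and the previous step applied to it with the correspondingly shifted cutoff (now supported in $[0,1)^n$) gives a piece of $F(f(\,\cdot+a_j\,))$ in $A(\T^n)$. Since the pointwise operator $T_F$ commutes with translations and $A(\T^n)$ is translation invariant, translating back yields $\phi_j F(f) \in A(\T^n)$, and summing over $j$ gives $F(f) = \sum_j \phi_j F(f) \in A(\T^n)$. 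The main obstacle is precisely this bookkeeping: Proposition~\ref{B1} is stated only for cutoffs supported inside the specific fundamental domain $[0,1)^n$, while any partition of unity on $\T^n$ must contain cutoffs whose supports straddle its boundary; the remedy is to absorb the boundary crossing into translates of $f$, using the translation invariance of both $A(\T^n)$ and the $M^{p,1}$ norm.
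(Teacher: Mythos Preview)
Your proof is correct and follows essentially the same route as the paper: localise $f\in A(\T^n)$ by a smooth cutoff to land in $M^{p,1}$, apply the hypothesis, transfer back via Proposition~\ref{B1}, and globalise by a partition of unity on $\T^n$. The only cosmetic difference is that the paper proves $\psi f\in M^{1,1}\subset M^{p,1}$ by passing to the Fourier side and using translation invariance of $M^{1,1}$, whereas you argue directly in $M^{p,1}$ via modulation invariance; and you spell out the translation bookkeeping for the partition of unity more explicitly than the paper does.
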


\begin{proof}
Let $ f\in A(\T ^n).$ Then $f^{\ast}(x)=f(e^{2 \pi ix_1},\cdots, e^{2 \pi ix_n})$ is a periodic function 
on $\R^n$ with absolutely convergent Fourier series $$f^*(x)=  \sum_{m \in \Z^n} \hat{f} (m)\, e^{2 \pi i m \cdot x} .$$ Choose $g\in C_{c}^{\infty}(\mathbb R^n)$, the space of smooth functions on $\R^n$, with compact support, such that $g\equiv1$ on $Q_n=[0, 1)^n.$ Then  we claim that
$gf^{\ast}\in M^{1,1}(\mathbb R^n) \subset M^{p,1}(\mathbb R^n).$ Once the claim is assumed, by hypothesis,
$F(g  f^{\ast})\in M^{p,1}(\mathbb R^n).$ Note that if $z\in \T^n$, then $z= (e^{2\pi ix_1},\cdots , e^{2\pi ix_n})$ for some $x=(x_1,\cdots ,x_n) \in Q_n$, hence \bea\label{cnct} F(f(z))=F( f^*(x))= F(g  f^{\ast}(x)), ~ \mbox{for}~ x \in Q_n.\eea

Now if $\phi\in C_c^\infty(\T^n)$, then $ g \phi^*$ is a compactly supported smooth function on $\R^n$. Also $\phi(z) = g(x) \phi^*(x)$ for every $x \in Q_n$, as per the notation above and hence 
\bea \label{3.2} \phi(z) F(f)(z)= g(x)\phi^*(x) F(gf^*)(x),\eea for some $ x \in Q_n$.
Thus in view of Proposition \ref{B1}, equation (\ref{3.2}) and the hypothesis, we have 
$$\|\phi F(f)\|_{ A(\T^n)}=\| g \phi F(gf^{\ast})\|_{ A(\T^n)}\lesssim \|F(g  f^{\ast})\|_{M^{p,1}},$$
for every smooth cutoff function $\phi$ supported on $Q_n$. Now by compactness of $\T^n$, a partition of unity argument shows that $F(f) \in A(\T^n)$.

To complete the proof, we need to prove the claim. Since $M^{1,1}(\R^n)$ is invariant under Fourier transform, enough to show that 
$\widehat{gf^{\ast}}= \widehat{g} \ast \widehat{f^{\ast}} \in M^{1,1}(\mathbb R^n).$ By Lemma \ref {ftmre},
applied to $f^\ast$,  we see that $$\widehat{f^\ast} = \mu = \sum_{m \in \Z^n} \hat{f} (m) \, \delta_{m}.$$ Hence, 
$$  \widehat{g} \ast \widehat{f^{\ast}}=  \sum_{m \in \Z^n} \hat{f} (m)  \,  \widehat{g} \ast \delta_{m}
= \sum_{m \in \Z^n} \hat{f} (m) \,  T_{m} \widehat{g} .$$

Since the translation operator $T_m$ is an isometry on $M^{1,1}(\mathbb R^n),$ it follows that the above series is absolutely convergent in $M^{1,1}(\mathbb R^n)$, and hence $\widehat{gf^{\ast}} \in M^{1,1}(\mathbb R^n)$ as claimed.
\end{proof}

\begin{proof}[Proof of Theorem \ref{orao}]

If $F$ operates in $M^{p,1}(\R^n)$, then $F$ operates in $A(\T^n)$ by Proposition \ref{Prop3.3}. Hence the analyticity follows from  Theorem \ref{HKKR} with $E=\R^2$.

Note that  the zero function $u_0 \equiv 0 \in M^{p,1}(\R^n)$ and $F(u_0)(x)= F(0)$ for all $x \in \R^n$. But the only constant function in $M^{p,1}(\R^n) \subset L^p(\R^n), ~1\leq p<\infty$
is the zero function. It follows that $F(0) = 0$ if $p<\infty$. \end{proof}

Now we prove the following converse to the above theorem and the proof is more interesting.

\begin{Theorem} \label{convs}
Let $F$ be a real analytic function on $\mathbb R^{2}$ with $F(0)=0$. Then  $ F(f)\in M^{1,1}(\mathbb R^{n})$ for all $f \in M^{1,1}(\mathbb R^{n}).$ 
 
Moreover, if $F$ is a real entire function given by  $F(x,y) =\sum_{m,n} a_{mn}  x^n y^n$,  
with $F(0)=0,$ then we also have the estimate
 \bea \label{favest} \|F(f) \|_{X} \lesssim \tilde{F}\left( \|f_1 \|_{X},  \|f_2\|_{X}\right) ,~ f=f_1+if_2\eea
for all $f\in X,$ where 
$ X$ denotes $M^{p,1}_{s}(\mathbb R^{n}),~1\leq p \leq \infty, s\geq 0,$ or $X=M^{p,q}_{s}(\mathbb R^{n}), ~1\leq p,q \leq  \infty, s>n/q',$
and $\tilde{F}(x,y)$ is the real entire function given by   $\tilde{F}(x,y)= \sum_{m,n} |a_{mn} | x^n y^n.$
 \end{Theorem}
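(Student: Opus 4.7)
The statement contains two assertions: a quantitative estimate (\ref{favest}) when $F$ is real entire, valid on the wider class of spaces $X$; and a qualitative conclusion for real analytic $F$ on the special space $M^{1,1}$. The plan is to prove the entire case first, since it follows almost directly from the algebra property of $X$, and then to bootstrap the real-analytic case by combining that argument with the Fourier invariance of $M^{1,1}$, a property that fails in $M^{p,1}$ for $p > 1$.

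For the entire case, I would begin with $f = f_1 + if_2 \in X$ and use (\ref{reimineq}) to bound $\|f_j\|_X \leq \|f\|_X$. Proposition \ref{apm} provides an algebra constant $C = C(X)$ with $\|gh\|_X \leq C\|g\|_X\|h\|_X$, and iterating it yields $\|f_1^m f_2^n\|_X \leq C^{m+n-1}\|f_1\|_X^m\|f_2\|_X^n$ for $m+n \geq 1$. Substituting into the absolutely convergent expansion $F(f_1, f_2) = \sum a_{mn}f_1^m f_2^n$ and applying the triangle inequality termwise gives
\[
\|F(f)\|_X \;\leq\; \sum_{m+n\geq 1} |a_{mn}|\,\|f_1^m f_2^n\|_X \;\leq\; C^{-1}\tilde F\bigl(C\|f_1\|_X,\,C\|f_2\|_X\bigr),
\]
which is finite because $\tilde F$ is entire. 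Absorbing $C$ into the implicit constant of $\lesssim$ (using the monotonicity of $\tilde F$ in each variable) yields both $F(f) \in X$ and the estimate (\ref{favest}); the partial sums converge in $X$-norm by the same termwise bound.

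For the real-analytic case on $M^{1,1}$, the Taylor series of $F$ at the origin converges only on a bounded bidisc, so the summation above only closes when $\|f\|_{M^{1,1}}$ is below a threshold. To reduce to that situation, I would first apply the Fourier invariance (\ref{ftinv}) at $p = 1$ combined with the embedding $M^{1,1} \hookrightarrow L^1$ to deduce $\|f\|_\infty \leq \|\hat f\|_{L^1} \lesssim \|f\|_{M^{1,1}}$, so the range of $f$ is contained in a compact set $K \subset \mathbb{C}$. By real analyticity of $F$ on $\mathbb{R}^2$ and compactness of $K$, $F$ extends holomorphically to a $\mathbb{C}^2$-neighborhood of $K$, and Cauchy estimates then produce a uniform positive radius of convergence $R > 0$ together with uniform coefficient bounds for the Taylor expansions of $F$ about every point $z_j \in K$.

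The main obstacle will be splicing these local expansions into a global $M^{1,1}$ bound. My plan is to cover $K$ by finitely many bidiscs $D_j$ of radius $< R/(2C)$ about centers $z_j$, take a smooth partition of unity $\{\phi_j\}$ on $\mathbb{R}^2$ subordinate to this cover, lift to $\mathbb{R}^n$ by $\chi_j := \phi_j \circ f$, and write $F(f) = \sum_j \chi_j F(f)$, expanding each piece via the locally convergent Taylor series of $F$ about $z_j$ in terms of the localized function $\tilde f_j := \chi_j\cdot(f - z_j) = \chi_j f - z_j \chi_j$. The delicate point is showing $\chi_j \in M^{1,1}$, itself a composition question, but the smoothness of $\phi_j$ together with the time- and frequency-side $L^2$-regularity of $f$ supplied by the Fourier invariance of $M^{1,1}$ lets one verify the sufficient condition in Proposition \ref{ge} for each $\chi_j$. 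Once the cutoffs are controlled, each summand $\chi_j F(f) = F(z_j)\chi_j + \sum_{m+n\geq 1} a_{mn}^{(j)}\,\chi_j\,\mathrm{Re}(\tilde f_j)^m\,\mathrm{Im}(\tilde f_j)^n$ is bounded in $M^{1,1}$ by the entire-case argument applied locally, since by our choice of radius each term stays below the threshold for convergence.
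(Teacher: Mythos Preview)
Your treatment of the real-entire case is correct and is essentially the paper's argument.

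The real-analytic case, however, has a genuine gap: the cutoffs $\chi_j=\phi_j\circ f$ are compositions of $f$ with smooth, compactly supported, hence \emph{non-analytic} functions $\phi_j$ on $\mathbb R^2$, and by Theorem~\ref{orao} such functions do \emph{not} operate on $M^{1,1}(\mathbb R^n)$ --- so your scheme is circular. This is not merely a technical worry. Since $f\in M^{1,1}$ gives $\widehat f\in L^1$ (by Fourier invariance and $M^{1,1}\subset L^1$) and hence $f\in C_0(\mathbb R^n)$, one has $\chi_j(x)\to\phi_j(0)$ as $|x|\to\infty$; any $\phi_j$ in the partition with $\phi_j(0)\neq0$ (and there must be one, since $0\in K$) yields a $\chi_j\notin L^1$, so $\chi_j\notin M^{1,1}$. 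Your appeal to Proposition~\ref{ge} cannot rescue this: that criterion requires $\widehat{\chi_j}\in L^2_s$ for some $s>n$, i.e.\ $\chi_j\in H^s(\mathbb R^n)$, a level of regularity that $f\in M^{1,1}$ simply does not provide. (There is also an algebraic slip: $\chi_j(f-z_j)^m\neq\bigl(\chi_j(f-z_j)\bigr)^m$, so your displayed expansion of $\chi_j F(f)$ in terms of $\tilde f_j$ is incorrect.)

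The paper sidesteps all of this by localizing in the \emph{domain} rather than the range. A key technical step (Proposition~\ref{small}) uses the Fourier invariance of $M^{1,1}$ to produce, for each $x_0\in\mathbb R^n$, a cutoff $\phi\in C_c^\infty(\mathbb R^n)$ with $\phi\equiv1$ near $x_0$ and $\|\phi[f_i-f_i(x_0)]\|_{M^{1,1}}<\delta$; multiplication by such $\phi$ is harmless because $C_c^\infty(\mathbb R^n)\subset M^{1,1}(\mathbb R^n)$ and $M^{1,1}$ is an algebra. The Taylor series of $F$ about $f(x_0)$ then converges in $M^{1,1}$ when evaluated on $\phi\,(f-f(x_0))$, showing $F(f)\in M^{1,1}_{\mathrm{loc}}$. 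An analogous smallness estimate for $(1-\psi)f$ near infinity (again from Proposition~\ref{small}, using $F(0)=0$) and the local-to-global Lemma~\ref{l2g} finish the proof.
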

 
\begin{Remark}
Corollary 3.3 of \cite[p. 355]{nonom} is a particular case of Theorem \ref{convs}; as every complex-entire function is real entire as a function on $\mathbb R^{2}.$
\end{Remark}

For arbitrary real analytic function $F$, we do not have a favourable estimate like (\ref{favest}). In fact, we prove $F(f) \in M^{1,1}(\mathbb R^{n})$
by a different argument. 
We start with the following definition.
\begin{Definition}
Let $f$ be a function defined on $\mathbb R^{n}$, we say that $f$ belongs to $M^{p,1}(\mathbb R^{n})$ locally at a point $x_0\in \mathbb R^{n}$ if there is a neighbourhood $V$ of $x_0$ and a function $g\in M^{p,1}(\mathbb R)$ such that $f(x)= g(x)$ for every $x \in V.$  
We say that $f$ belongs to $M^{p,1}(\mathbb R^{n})$ at $\infty$, if there is a compact set $K\subset \mathbb R^{n}$ and  a function $h\in M^{p,1}(\mathbb R^{n})$ such that $f(x)=h(x)$ for all $x\in \R^n \setminus K.$
\end{Definition}

We denote by $M^{p,1}_{loc}(\mathbb R^{n})$, the space of functions that are locally in $M^{p,1}(\mathbb R^{n})$ at each point $x_0 \in \R^n$. 

\begin{Lemma} \label{phif}

Let $1\leq p \leq \infty.$ A function $f \in M^{p,1}_{loc}(\mathbb R^{n}),$ if and only if $\varphi f \in M^{p,1}(\mathbb R^{n})$ for every $\varphi \in C_c^\infty(\R^n)$.

A function $f$ belongs to $M^{p,1}(\mathbb R^{n})$ at $\infty$, if and only if there exists a $\varphi \in C_c^\infty(\R^n)$ such that
 $(1-\varphi)f \in M^{p,1}(\mathbb R^{n}).$ 
 
 \end{Lemma}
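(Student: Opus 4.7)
The plan is to establish both equivalences by combining the multiplication algebra property of $M^{p,1}(\R^n)$ (Proposition \ref{apm}) with standard partition of unity arguments. The key observation throughout is that $C_c^\infty(\R^n) \subset \mathcal{S}(\R^n) \subset M^{p,1}(\R^n)$, so multiplication by a $C_c^\infty$ function preserves $M^{p,1}(\R^n)$ thanks to the inequality $\|\varphi g\|_{M^{p,1}} \lesssim \|\varphi\|_{M^{p,1}}\|g\|_{M^{p,1}}$.

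For part (i), the easy direction is $(\Leftarrow)$: given $x_0 \in \R^n$, choose $\varphi \in C_c^\infty(\R^n)$ with $\varphi \equiv 1$ on some open neighbourhood $V$ of $x_0$. Then $g := \varphi f \in M^{p,1}(\R^n)$ agrees with $f$ on $V$, hence $f \in M^{p,1}_{loc}(\R^n)$. For the harder direction $(\Rightarrow)$, fix $\varphi \in C_c^\infty(\R^n)$ with compact support $K$. By hypothesis, to each $x \in K$ corresponds a neighbourhood $V_x$ and a function $g_x \in M^{p,1}(\R^n)$ such that $f = g_x$ on $V_x$. By compactness, finitely many such neighbourhoods $V_{x_1}, \dots, V_{x_N}$ cover $K$; let $\{\psi_i\}_{i=1}^N \subset C_c^\infty(\R^n)$ be a subordinate partition of unity with $\sum_i \psi_i \equiv 1$ on a neighbourhood of $K$ and $\operatorname{supp}\psi_i \subset V_{x_i}$. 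Then
\[
\varphi f \;=\; \sum_{i=1}^N \varphi\, \psi_i \, f \;=\; \sum_{i=1}^N \varphi\, \psi_i \, g_{x_i},
\]
since $f = g_{x_i}$ on the support of $\psi_i$. Each $\varphi \psi_i$ lies in $C_c^\infty(\R^n) \subset M^{p,1}(\R^n)$, so by the algebra property $\varphi \psi_i g_{x_i} \in M^{p,1}(\R^n)$, and a finite sum of such elements belongs to $M^{p,1}(\R^n)$.

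For part (ii), the $(\Leftarrow)$ direction is immediate: if $(1-\varphi)f \in M^{p,1}(\R^n)$ for some $\varphi \in C_c^\infty(\R^n)$, then taking $K = \operatorname{supp}\varphi$ and $h = (1-\varphi)f$, one has $f = h$ on $\R^n \setminus K$. For $(\Rightarrow)$, suppose $K$ is compact and $h \in M^{p,1}(\R^n)$ with $f = h$ on $\R^n \setminus K$. Choose $\varphi \in C_c^\infty(\R^n)$ with $\varphi \equiv 1$ on $K$; then $(1-\varphi)$ vanishes on $K$, so on all of $\R^n$ we have $(1-\varphi)f = (1-\varphi)h = h - \varphi h$. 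Since $\varphi \in \mathcal{S}(\R^n) \subset M^{p,1}(\R^n)$ and $h \in M^{p,1}(\R^n)$, the algebra property gives $\varphi h \in M^{p,1}(\R^n)$, hence $(1-\varphi)f = h - \varphi h \in M^{p,1}(\R^n)$.

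The only mildly subtle point is the forward direction of (i), where one must justify that the pointwise identities $f = g_{x_i}$ on $V_{x_i}$ assemble into a genuine identity of distributions after multiplication by the partition-of-unity functions; this is routine because $\psi_i$ is supported in $V_{x_i}$, so $\psi_i f$ and $\psi_i g_{x_i}$ coincide as functions on $\R^n$ (extending by zero outside $V_{x_i}$). No delicate analytic issue arises beyond the algebra property already granted by Proposition \ref{apm}.
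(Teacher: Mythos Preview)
Your proof is correct and follows essentially the same route as the paper's: the easy directions are handled identically, and the forward direction of (i) is done via a finite partition of unity subordinate to a covering of $\operatorname{supp}\varphi$, with the algebra property (Proposition~\ref{apm}) justifying that each $\varphi\psi_i g_{x_i}\in M^{p,1}$. Your treatment of the forward direction of (ii) is in fact slightly cleaner than the paper's, since you correctly insist that $\varphi\equiv 1$ on $K$ so that $(1-\varphi)f=(1-\varphi)h$ holds globally.
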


\begin{proof}

If $\varphi f \in M^{p,1}(\mathbb R^{n})$ for all $\varphi \in C_c^\infty(\R^n)$, then $f $ is clearly in $ M_{loc}^{p,1}(\mathbb R^{n})$. In fact
for any point $x\in \R^n$, we can choose a smooth function $\varphi$ with compact support, which has value one in a neighbourhood of $x$, by smooth version of Urysohn lemma, see \cite[p.245]{fol}. Then $f\equiv \varphi f$ in that neighbourhood.

Conversely, suppose $f  \in M^{p,1}_{loc}(\mathbb R^{n})$ and $\varphi \in C_c^\infty(\R^n)$ with support $K$.  By hypothesis,  for each point 
$x \in K$, there is an an open ball $B_r(x)$ of radius $r$ and centered at $x$ such that $ f $ coincides with a  $g  \in M^{p,1}(\mathbb R^{n})$ in that ball. 
By compactness of $K$, we can find finitely many points $x_1,x_2,..x_N$ such that the balls $B_{r_i}(x_i), i=1,2,..,N$ cover $K$. Let $\{\varphi_i: i=1,2,...,N\}$ be a partition of unity subordinate to this cover. 

Let $g_i \in M^{p,1}(\mathbb R^{n})$ be such that $f=g_i$ on $B_{r_i}(x_i)$. Since $\phi_i$ is supported in $B_{r_i}(x_i)$, we also have $\varphi_i f = \varphi_i g_i $ on $B_{r_i}(x_i)$, and $\varphi_i g_i  \in M^{p,1}(\mathbb R^{n})$ since $\varphi_i \in C_c^\infty(\mathbb R^{n})$.  Note that we also have  $ \varphi \, \varphi_i g_i  \in M^{p,1}(\mathbb R^{n})$, since $\varphi \varphi_i$ is also in $C_c^\infty(\mathbb R^{n})$.
Thus $\varphi \, \varphi_i f \in M^{p,1}(\mathbb R^{n})$ for each $i$. But $\sum_{i=1}^N  \varphi_i  =1$, implies
$\varphi f = \sum_{i=1}^N \varphi \, \varphi_i f \in M^{p,1}(\mathbb R^{n})$, thus proves the first part of the Lemma.

Again, if $\varphi \in C_c^\infty(\R^n)$ is such that
 $(1-\varphi)f \in M^{p,1}(\mathbb R^{n}),$ clearly  $f$ coincides with a function in $ M^{p,1}(\mathbb R^{n})$ in the compliment of a compact set, namely the function $(1 - \varphi)f$. On the other hand, suppose there exists a $g \in M^{p.1}(\R^n)$ such that $f=g$ on the complement of a large ball $B(0,R)$ of radius $R$, centered at origin. Let $\varphi$ be a smooth function with support $B(0,R)$. Then $(1-\varphi) \equiv 1 $ on $|x|>R$ and hence $(1-\varphi)f=(1-\varphi)g  = g -\varphi g \in M^{p,1}(\R^n)$, as both $g$ and $\varphi g$ are in $M^{p,1}(\R^n)$. This completes the  proof.
\end{proof}

The following lemma gives a useful test for a function to be in $ M^{p,1}(\mathbb R^{n}).$

\begin{Lemma} \label{l2g}
If $f\in M_{loc}^{p,1}(\R^n)$ and $f$ belongs to $M^{p,1}(\mathbb R^{n})$ at infinity, for $1\leq p \leq \infty$, then  $f \in M^{p,1}(\mathbb R^{n})$.
\end{Lemma}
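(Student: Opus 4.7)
The plan is to invoke both halves of Lemma \ref{phif} and then write $f$ as the sum of two pieces, each of which lies in $M^{p,1}(\R^n)$ for a different reason. First, since $f$ belongs to $M^{p,1}(\R^n)$ at infinity, the second part of Lemma \ref{phif} gives a specific $\varphi \in C_c^\infty(\R^n)$ such that $(1-\varphi)f \in M^{p,1}(\R^n)$. Then, using the first part of Lemma \ref{phif} together with the hypothesis $f \in M_{loc}^{p,1}(\R^n)$, I obtain $\varphi f \in M^{p,1}(\R^n)$ for the same cutoff $\varphi$.

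With these two facts in hand, I would write the trivial decomposition
\begin{equation*}
f \;=\; \varphi f \;+\; (1-\varphi)f,
\end{equation*}
and conclude that $f \in M^{p,1}(\R^n)$ since $M^{p,1}(\R^n)$ is a vector space. The argument is essentially one line once the two directions of Lemma \ref{phif} are applied, so there is no serious obstacle; the only thing to watch is that the same $\varphi$ is used in both steps, which is legitimate because the first part of Lemma \ref{phif} applies to every $\varphi \in C_c^\infty(\R^n)$, in particular to the one produced by the second part.
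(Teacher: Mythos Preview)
Your proposal is correct and follows essentially the same argument as the paper: obtain $\varphi\in C_c^\infty(\R^n)$ from the ``at infinity'' hypothesis via the second part of Lemma~\ref{phif}, apply the first part of Lemma~\ref{phif} to get $\varphi f\in M^{p,1}(\R^n)$, and sum the two pieces. Your remark that the same $\varphi$ works in both steps is exactly the point, and the paper's proof is equally brief.
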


\begin{proof}
Since $f$ belongs to $M^{p,1}(\mathbb R^{n} )$ at infinity, there exists  a $\varphi \in C_c^\infty(\R^n)$ such that $(1-\varphi )f \in 
M^{p,1}(\mathbb R^{n} )$. 
Now  $f= \varphi f + (1-\varphi )f $, and  both $\varphi f$ and $(1-\varphi )f $ are in $M^{p,1}(\mathbb R^{n} )$, 
by  Lemma \ref{phif}. Hence, $f \in M^{p,1}(\mathbb R^{n}).$ This completes the proof.
\end{proof}

Now we proceed to prove Theorem \ref{convs}. We start with the  following technical result.

\begin{Proposition} \label{small}
Let $f\in M^{1,1}(\mathbb R^{n} )$, $x_0 \in \R^n$ and $\epsilon >0$. Then there exists a $\phi \in C_c^\infty(\mathbb R^{n})$ such that  $\| \phi  \left[ f -f(x_0)  \right] \|_{M^{1,1}} < \epsilon$.  The function $\phi $ can be chosen so that $\phi \equiv 1$ in  some neighbourhood of $x_0$.

There also exists a $\psi \in C_c^\infty(\R^n)$ such that $\| (1-\psi) f \|_{M^{1,1}}<\epsilon$. 
\end{Proposition}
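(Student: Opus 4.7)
For part (b), fix $\psi_0 \in C_c^\infty(\R^n)$ with $\psi_0(0)=1$ and set $\psi_R(x):=\psi_0(x/R)$ for $R>0$. Then $\widehat{\psi_R}(w)=R^n\widehat{\psi_0}(Rw)$ and $\int\widehat{\psi_R}\,dw=\psi_0(0)=1$, so $\{\widehat{\psi_R}\}_{R>0}$ has exactly the form of the approximate identity in Lemma \ref{ai} with parameter $r=1/R\to 0$ and kernel $\widehat{\psi_0}\in\mathcal S$. Applying that lemma to $\widehat{f}\in M^{1,1}(\R^n)$, together with the product-convolution identity $\widehat{\psi_R f}=\widehat{\psi_R}\ast\widehat{f}$ and the Fourier invariance (\ref{ftinv}) of the $M^{1,1}$ norm, yields $\psi_R f\to f$ in $M^{1,1}$ as $R\to\infty$; hence $\|(1-\psi_R)f\|_{M^{1,1}}<\epsilon$ for large $R$, and $\psi:=\psi_R$ works.

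For part (a), by translation invariance of the $M^{1,1}$ norm reduce to $x_0=0$. Fix $\phi_0\in C_c^\infty(\R^n)$ with $\phi_0\equiv 1$ on $\{|y|\le 1/2\}$ and $\mathrm{supp}\,\phi_0\subset\{|y|\le 1\}$, and set $\phi_r(x):=\phi_0(x/r)$, so in particular $\phi_r\equiv 1$ on $\{|x|\le r/2\}$. The key technical ingredients are the two dilation estimates
\[
\sup_{0<r\le 1}\|\phi_r\|_{M^{1,1}}<\infty\qquad\text{and}\qquad\|\phi_r(\cdot)\,x_j\|_{M^{1,1}}\lesssim r\quad(j=1,\dots,n),
\]
which follow by writing $\phi_r=D_r\phi_0$ and $\phi_r(x)x_j=r\,(D_r\eta_j)(x)$ with $\eta_j(y):=\phi_0(y)y_j\in C_c^\infty$, and then bounding $\|D_rh\|_{M^{1,1}}$ uniformly in $r\in(0,1]$ (for $h\in C_c^\infty$) via the change-of-window formula (Remark \ref{equidm}) with a Gaussian window $g$, for which $\|V_{D_{1/r}g}\,g\|_{L^1}$ can be computed explicitly in closed form. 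Given $\epsilon>0$, use the density of $C_c^\infty$ in $M^{1,1}$ to pick $g\in C_c^\infty$ with $\|f-g\|_{M^{1,1}}<\delta$ (to be chosen) and decompose
\[
\phi_r\bigl(f-f(0)\bigr)=\phi_r\bigl((f-g)-(f(0)-g(0))\bigr)+\phi_r\bigl(g-g(0)\bigr).
\]
The first summand has $M^{1,1}$ norm $\lesssim\delta$, by the algebra inequality (Proposition \ref{apm}), the uniform dilation bound, and the embedding $M^{1,1}\hookrightarrow L^\infty$ (which controls $|f(0)-g(0)|$ by $\|f-g\|_{M^{1,1}}$). For the second, Taylor's theorem gives $g(x)-g(0)=\sum_{j=1}^n x_j G_j(x)$ with $G_j(x)=\int_0^1\partial_j g(tx)\,dt\in C^\infty$; fixing a cutoff $\chi\in C_c^\infty$ equal to $1$ on $\mathrm{supp}\,\phi_r\subset\{|x|\le 1\}$, the algebra inequality yields
\[
\|\phi_r x_j G_j\|_{M^{1,1}}=\|(\phi_r x_j)(\chi G_j)\|_{M^{1,1}}\lesssim r\,\|\chi G_j\|_{M^{1,1}},
\]
so $\|\phi_r(g-g(0))\|_{M^{1,1}}\lesssim r$. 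Choosing $\delta$ and then $r$ small enough finishes (a) with $\phi:=\phi_r$; the neighbourhood condition holds since $\phi_r\equiv 1$ on $\{|x|\le r/2\}$.

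The main obstacle is the pair of dilation estimates above: this is where one must leave purely Banach-algebra reasoning and enter the specific phase-space structure of the Feichtinger algebra $M^{1,1}$, verified by a direct Gaussian STFT computation. Everything else is routine density plus algebra.
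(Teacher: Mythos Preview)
Your argument for part (b) is essentially identical to the paper's: both pass to the Fourier side, recognise $\widehat{\psi_R}$ as an approximate identity, and invoke Lemma~\ref{ai} together with the Fourier invariance (\ref{ftinv}).

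For part (a) your route is genuinely different. The paper never approximates $f$ by a smooth function and never needs dilation estimates. Instead it exploits the nesting $\varphi^\lambda = \varphi\,\varphi^\lambda$ for $\lambda>2$ (with $\varphi\equiv 1$ on $B_1$), so that $h^\lambda = \varphi\,h^\lambda$; then by Fourier invariance and Proposition~\ref{convoreslt},
\[
\|h^\lambda\|_{M^{1,1}} = \|\widehat{\varphi}\ast\widehat{h^\lambda}\|_{M^{1,1}} \le \|\widehat{\varphi}\|_{M^{1,1}}\,\|\widehat{h^\lambda}\|_{L^1},
\]
reducing the problem to an $L^1$ estimate. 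Writing $f(0)=\int\widehat{f}$ (which uses $M^{1,1}\subset L^1$ and Fourier invariance), one gets $\widehat{h^\lambda}(\xi)=\int\widehat{f}(y)\,[\widehat{\varphi^\lambda}(\xi-y)-\widehat{\varphi^\lambda}(\xi)]\,dy$, and the $L^1$ norm tends to zero by continuity of translation in $L^1$ and dominated convergence. This is short and uses nothing beyond the $L^1$--convolution structure.

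Your approach trades this Fourier-side trick for a density-plus-Taylor reduction, at the cost of needing the uniform dilation bound $\sup_{0<r\le 1}\|D_r h\|_{M^{1,1}}<\infty$. That bound is correct (your Gaussian computation gives $\|V_{D_{1/r}g}\,g\|_{L^1}\sim r^{-n}$, which exactly cancels the Jacobian factor $r^n$ coming from the STFT dilation identity $V_g(D_r h)(x,w)=r^n\,V_{D_{1/r}g}h(x/r,rw)$), but note that the change-of-window formula alone is not enough --- you must first invoke this dilation identity before Remark~\ref{equidm} applies, and it would be worth stating it explicitly. Once the dilation bounds are in hand, the rest of your argument (algebra inequality, $|f(0)-g(0)|\lesssim\|f-g\|_{M^{1,1}}$, Taylor with cutoff) is clean and correct. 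Your method is more ``Banach-algebra'' in spirit and makes the localisation mechanism transparent; the paper's method is slicker but leans more heavily on the special Fourier-invariance of $M^{1,1}$.
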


\begin{proof} 
 Let $\varphi$ be a smooth function supported in the ball $B_2(0)$ such that $\varphi \equiv 1$ on $B_1(0)$ and set 
 $\varphi^\lambda(x) = \varphi(\lambda x)$. To prove the first part, enough to show that the $M^{1,1}$
 norm of the function
 $ h^\lambda(x):= \varphi^\lambda(x-x_0) [f(x)-f(x_0)]$ tends to zero as $\lambda \to \infty$.
 
For notational convenience, we assume $x_0=0$. Note that 
 \bea \label{multphi} h^\lambda(x) =  \varphi(x)\,  h^\lambda (x), 
 \eea
for $\lambda >2$, as $\varphi \equiv 1$ on the support of $\varphi^\lambda$ in this case. Since the Fourier transform is an isometry on $M^{1,1}(\R^n)$,  enough to estimate $\widehat{h^\lambda}$. Since 
$\widehat{\varphi \,h^\lambda}= \widehat{\varphi} \ast \widehat{h^\lambda},$
in view of (\ref{multphi}) and  Proposition \ref{convoreslt}, we see that 
\Bea
\| \widehat{h^\lambda}\|_{M^{1,1}} &=&  \| \widehat{ \varphi \,h^\lambda } \|_{M^{1,1}}\\
&\leq&\| \widehat{\varphi} \|_{M^{1,1}} \| \widehat{h^\lambda}\|_{L^1}.
\Eea
 
Since $\widehat{h^\lambda}= \widehat{\varphi^\lambda \, f}- f(0) \widehat{\varphi^\lambda} 
  = \widehat{\varphi^\lambda} \ast \widehat{ f} - f(0) \widehat{\varphi^\lambda}$, writing $f(0)= \int_{\R^n} \widehat{ f} (y) dy,$ we see that 
   \Bea \widehat{h^\lambda}(\xi) &=& \int_{\R^n } \widehat{f}(y) \left[ \widehat{\varphi^\lambda}(\xi-y) 
  - \widehat{\varphi^\lambda}(\xi) \right] dy \\
  &=& \int_{\R^n } \widehat{f}(y)\frac{1}{\lambda^n}\left[ \widehat{\varphi}\left(\frac{\xi-y}{\lambda}\right)
   - \widehat{\varphi} \left(\frac{\xi}{\lambda} \right) \right] dy.
  \Eea
  Taking the $L^1$ norm on both sides and by the change of variable $\xi \to \lambda \xi$, we see that 
  \bea
 \int_\xi| \widehat{h^\lambda}| d \xi &\leq&  \int_{\R^n } |\widehat{f}(y)|  
 \int_{\xi } \left| \widehat{\varphi}\left(\xi-\frac{y}{\lambda}\right)
   - \widehat{\varphi}\left(\xi \right) \right| d \xi \, dy \nonumber \\
 \label{revineq}&\leq&  \int_{\R^n } |\widehat{f}(y)|
 \left\| \widehat{\varphi}\left(\cdot- \frac{y}{\lambda}\right)
   - \widehat{\varphi}\left(\cdot \right) \right\|_{L^1}\, dy.  
    \eea
Now we note that $M^{1,1}(\mathbb R^{n}) \subset L^1(\mathbb R^{n})$ and hence $\widehat{f} \in L^1(\R^n)$. Thus the above tends to zero as $\lambda \to \infty$, by dominated convergence theorem and the continuity of the translation in $L^1(\mathbb R^{n})$. 
  
  For general $x_0$, we can continue the same proof by taking $\varphi^\lambda(x-x_0)$ and carrying out the proof as above.
  
  To prove the second part, we choose a $\chi \in C_c^\infty(\R^n)$ with $\chi(0) =1$, and estimate the $M^{1,1} $ norm of $[1- \chi(\lambda x)] f(x)$, for $\lambda >1$. As before, since $M^{1,1}(\mathbb R^{n})$ is invariant under the Fourier transform, enough to estimate the Fourier transform of $[1-\chi(\lambda x) ] f(x)$, which is $\widehat{f}(\xi)- \widehat{f} \ast \varphi_\lambda(\xi)$, with $\varphi =\widehat{\chi}$. This tends to zero in  $M^{1,1}(\mathbb R^{n})$ as $\lambda \to 0$, by Lemma \ref{conv} since $\int \widehat{\varphi} = \varphi(0)=1$.

Now we can choose for $\phi,$ any $\varphi^\lambda$ for  sufficiently small $\lambda$. This completes the proof.
\end{proof}

\begin{Remark}\label{manyfns} If there are finitely many functions $f_1,f_2,...f_N$, then one can choose a single  
$\phi$ and $\psi$ that works for all these functions. All we need to do is to dominate the inequality (\ref{revineq})
with $|\widehat{f}\, |$ replaced by $\sum_1^N|\widehat{f}_i|$, to get a single $\phi$ valid for all $f_i$'s.

On the other hand, if $\psi_i= \varphi_{\lambda_i}$ for $f_i$, then if $\lambda = \min\{\lambda_i, i=1,2,...N\}$, then 
$\psi = \varphi_\lambda$ will work for all $f_i$, as observed in Remark \ref{rmkmany}. 

\end{Remark}

\begin{proof}[Proof of Theorem \ref{convs}]
Write $f= f_1+ if_2 \in M^{1,1}(\R^n)$, where $f_1$ and $f_2$ are real functions, and with an abuse of notation, we write $F(f) = F (f_1, f_2)$.
To show that $F(f)$ is in $M^{1,1}(\R^n)$,  enough to show, in view of Lemma \ref{l2g} that 
$F(f) \in M^{1,1}_{loc}(\R^n)$ and $F(f)$ belongs to $M^{1,1}(\mathbb R^{n})$ at $\infty.$
First we show that $F(f) \in M^{1,1}_{loc}(\R^n)$. 

Fix $x_{0} \in \mathbb R^{n} $ and put $f(x_0)= s_{0} + it_{0}$. 
Since $F$ is real analytic at $(s_0, t_0)$, there exists a $\delta >0 $ such that $F$ has the power  series expansion
\begin{eqnarray} \label{3.4}
\label{mc}
F(s,t)= F(s_{0}, t_{0}) + \sum_{m,n=0}^{\infty} a_{mn} (s-s_{0})^{m} (t-t_{0})^{n},~ ~(a_{00}=0)
\end{eqnarray}
which converges absolutely for $|s-s_{0}|\leq \delta, |t-t_{0}|\leq \delta.$
Then 
\bea
\label{mc2}
F(f_1(x),f_2(x)) & = & F(s_{0}, t_{0})   \nonumber \\
&&+ \sum_{(m,n)\neq (0,0) } a_{mn} [f_1(x)-f_1(x_{0}) ]^{m} [f_2(x)-f_2(x_{0})]^{n}
\eea
whenever the series converges. 

Note that both $f_1$ and $f_2$ are in $M^{1,1}(\mathbb R^{n})$, being the real and imaginary part of $f$. 
Hence by Proposition \ref{small}, and Remark \ref{manyfns}, we can find a $\phi \in C_c^\infty(\R^n)$, such that 
$\phi \equiv 1$ near $x_0$ and $\| \phi [ f_i - f_i(x_0) ]  \|_{M^{1,1}}< \delta $, for $i=1,2$.
Now consider the function $G$ on $\R^n$ defined by 
\bea
G(x) &=& \phi(x) \, F(s_{0}, t_{0}) \nonumber\\
&& + \sum_{(m,n)\neq (0,0) } a_{mn} \left( \phi(x)[ f_1(x)-f_1(x_{0}) ] \right) ^{m} \left( \phi(x) [f_2(x)-f_2(x_{0}) ] \right)^{n}.\eea

Since  $\| \varphi [f_i -f_i(x_0)] \|_{M^{1,1}}< \delta $, for $i=1,2$ and in view of the algebraic inequality (\ref{algineq}), we see that the above series is absolutely convergent in $M^{1,1}(\R^n).$
Also since $\phi \equiv 1$ in some neighbourhood of $x_0$, it follows that $G \equiv F(f)$ in some  neighbourhood of $x_0$.
Since $x_0$ is arbitrary, this shows that $F(f) \in M^{1,1}_{loc}(\R^n)$

To show that $F(f)\in M^{1,1}(\R^n)$ at infinity, we first observe that $f(\infty)=0$, in the following sense:
If $A_R= \{ x_k \in \R^n : |x_k|>R ~\mbox{ and}~ \lim_{x_k\to \infty} f(x_k) \neq 0\}$, then $\lim _{R\to \infty } |A_R| =0$, where $| A |$ denotes the Lebesgue measure of the set $A$. 

Since $f(\infty)=0 $ in the above sense, we take $(s_0,t_0)=(0,0)$ in equation (\ref{3.4}).  Since $F({ 0})=0,$  the expansion \eqref{mc2} now becomes
\Bea
\label{mc}
F(f_1(x),f_2(x)) & = & \sum_{(m,n)\neq (0,0) } a_{mn} \, [f_1(x)] ^{m} \, [f_2(x)]^{n},
\Eea  
whenever the series converges.

By Proposition \ref{small}, we have $\|(1-\psi) f_i \|_{M^{1,1}} < \delta$, for $i=1,2$ for some $\psi \in C_c^\infty(\R^n)$. 
 Now consider the function $H$ defined by
\Bea
H(x)= \sum_{(m,n)\neq (0,0) } a_{mn} \, [(1-\psi (x))f_1(x) ]^m \, [(1-\psi (x))f_2(x) ]^n.\Eea

The above series is absolutely convergent in $M^{1,1}(\R^n)$, in view of the above norm estimates, hence
$H \in M^{1,1}(\R^n)$.
Also since $\psi$ is compactly supported, $1- \psi \equiv 1 $ in the complement of a large ball centered at the  origin, hence $H =F(f)$ in the compliment of a compact set. This shows that $F(f)$ belongs to $M^{1,1}(\R^n)$ at infinity. 

Note that if $F$ is real entire, then $ F$ has a power series about the origin. Also since $F(0)=0,$
the equation (\ref{mc2}) in this case takes the form
\bea
F(f_1(x),f_2(x)) =
\sum_{(m,n)\neq (0,0) } a_{mn} \, [f_1(x) ]^{m}\,  [f_2(x)]^{n}
,\eea
which converges in $X$ for all $f\in X$. Hence, in view of the multiplicative inequality  \eqref {algineq} and \eqref {reimineq}, we see that
\Bea \| F(f)\|_{X} &\lesssim& \sum_{(m,n)\neq (0,0) } |a_{mn}|   \|f_1\|_{X}^m \,  \|f_2\|_{X}^n\\
&&~~~ = \tilde{F} \left(\|f_1\|_{X} , \|f_2\|_{X} \right),
\Eea which is the asserted estimate in the real entire case.
\end{proof}

Now we give the proof of the characterisation of functions operating on $M^{1,1}(\R^n)$.
\begin{proof}[Proof of Theorem \ref{re}]
By Theorem \ref{orao}, if $F$ operates in $M^{1,1}(\mathbb R^{n})$, then $F(0)=0$ and $F$ is real analytic on $\R^2$.
Also by Theorem \ref{convs} if $F$ is real analytic on $\R^2$, and $F(0)=0$, then $F$ operates in $M^{1,1}(\R^n).$
It follows that $F$ operates in $M^{1,1}(\mathbb R^{n})$ if and only if $F$ is real analytic on $\R^2$ and $F(0)=0.$
\end{proof} 

\section{Applications to dispersive equations}
As an application of Theorem \ref{convs}, we study in this section, the local well-posedness of the initial value problems for some dispersive equations in the modulation spaces  $M^{p,1}_{s}(\R^{n})$ for $ 1\leq p \leq \infty, s\geq 0$ and $ M^{p,q}_{s}(\R^{n})$ for $1\leq p,q \leq  \infty, s>n/q'$. We use the generic notation $X$, for the above modulation spaces. 
Specifically, we study the following initial value problems:
 \begin{equation}
\label{nls}
(NLS) \  i\frac{\partial u}{\partial t}+\bigtriangleup_{x}u= F(u),  ~u(x, t_0)=u_{0}(x),
\end{equation}
\begin{equation}
\label{nlw}
(NLW) \  \frac{\partial^{2} u}{\partial t^{2}}- \bigtriangleup_{x}u = F(u), ~u(x,t_0)=u_{0}(x), \frac{\partial u}{\partial t}(x,t_0)=u_{1}(x), 
\end{equation}
\begin{equation}
\label{nlkg}
(NLKG) \ \frac{\partial^{2}u}{\partial t^{2}} + (I-\bigtriangleup_{x})u =F(u), ~u(x,t_0)=u_{0}(x), \frac{\partial u}{\partial t}(x,t_0)=u_{1}(x),
\end{equation} 
where $t_0 \in \R$, $u_0, u_1$ are complex valued functions on $\R^n$, $I$ is the identity map
and $F$ is a real entire function with $F(0)=0$. 
Our approach is via standard fixed point argument using Banach's contraction principle.

We start with the observation that the partial derivatives $\partial_xF(x,y)$ and $\partial_y F(x,y)$ are  real entire functions if $F$ is real entire. This can be easily seen from the power series expansion $F(x,y)= \sum_{m,n=0}^\infty a_{mn} x^m \, y^n, ~ (x,y) \in \R^2$. In fact we can do term by term differentiation and get \bea \label{dxf}\partial_xF(x,y) = \sum_{m\geq 1,n\geq 0} m \, a_{mn} \, x^{m-1} \, y^n.\eea
This is justified because the above power series is absolutely convergent on $\R^2$: In fact, we have  $m \leq 2^{m-1}$ for 
$m \geq 1$ and hence  $$|m\, x^{m-1}| \leq  (2 |x|)^{m-1} \leq (1+2|x|)^m.$$
Thus $\sum_{m\geq 1, n \geq 0} m\, |a_{mn}| \, |x|^{m-1} \, |y|^n \leq \tilde{F}(1+2|x|,|y|) <\infty$ for all $(x,y) \in \R^2$. Similarly, we can also show that $\partial_yF$ is real entire and has the  expansion
\bea
 \label{dyf}\partial_yF(x,y) = \sum_{m\geq 0,n\geq 1} n \, a_{mn} \, x^m \, y^{n-1}\eea
valid for all $(x,y)\in \R^2$. From \eqref{dxf} and \eqref{dyf} 
we also get the inequalities \Bea |\partial_xF(x,y)| &\leq&  \widetilde{\partial_xF}(|x|,|y|) :=\sum_{m\geq 1,n\geq 0} m |a_{mn}|| x|^{m-1} \, |y|^n,\\
|\partial_yF(x,y)| &\leq&  \widetilde{\partial_yF}(|x|,|y|) := \sum_{m\geq 0,n\geq 1} n |a_{mn}| |x|^m \, |y|^{n-1}.\Eea

Note that we cannot expect a similar inequality by replacing $x$ and $y$ by functions $u$ and $v$ in the Banach algebra $X$ as $\partial_xF(u)$ and $\partial_y F(u)$ need not be in $X$ because of the possible nonzero constant term in the power series expansion. However, we have the following substitute given in the following 

\begin{Lemma} \label{Lmdf}
Let $F$ be a real entire function on $\R^2$, then the partial derivatives $\partial_xF(x,y)$ and $\partial_y F(x,y)$ are also real entire functions. Moreover, if $u =u_1+iu_2 \in X$, the modulation space mentioned above, then the following estimates hold 
\bea 
\|w \partial_x F(u_1,u_2)\|_X &\lesssim& \|w \|_X \, 
\widetilde{\partial_xF}(\|u_1\|_X,\|u_2\|_X),\\
\|w \partial_y F(u_1,u_2)\|_X &\lesssim& \|w\|_X
\widetilde{\partial_yF}(\|u_1\|_X,\|u_2\|_X)\eea
for every $w \in X$.
\end{Lemma}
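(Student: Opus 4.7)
The first assertion---that $\partial_x F$ and $\partial_y F$ are real entire---has in fact already been established in the paragraph preceding the lemma by termwise differentiation of the power series, so the plan concentrates on the norm estimates. I will do $\partial_x F$; the case of $\partial_y F$ is symmetric.

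The plan is to lift the termwise-differentiated expansion
$$\partial_x F(u_1, u_2) \;=\; \sum_{m \geq 1,\, n \geq 0} m \, a_{mn} \, u_1^{m-1} \, u_2^{n}$$
into the Banach algebra $X$. Multiplying by $w$, taking $\|\cdot\|_X$ and pushing the norm through the absolutely convergent sum yields
$$\|w \, \partial_x F(u_1,u_2)\|_X \;\leq\; \sum_{m \geq 1,\, n \geq 0} m \, |a_{mn}|\, \|w\, u_1^{m-1}\, u_2^{n}\|_X.$$
By iterating the multiplicative inequality in Proposition \ref{apm}, there is a constant $C$ (depending only on $X$) so that
$$\|w\, u_1^{m-1}\, u_2^{n}\|_X \;\leq\; C^{m+n-1}\, \|w\|_X\, \|u_1\|_X^{m-1}\, \|u_2\|_X^{n}.$$
Using inequality \eqref{reimineq} to control $\|u_1\|_X, \|u_2\|_X \leq \|u\|_X$ at the end of the argument is then automatic.

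To produce the clean form claimed, I would pass to the equivalent norm $|\!|\!| f |\!|\!| := C \|f\|_X$, for which the algebra inequality is genuinely submultiplicative (constant $1$). The displayed estimate then becomes
$$|\!|\!| w \, \partial_x F(u_1,u_2) |\!|\!| \;\leq\; |\!|\!| w |\!|\!| \sum_{m\geq 1,\,n\geq 0} m\,|a_{mn}|\,|\!|\!| u_1 |\!|\!|^{m-1}\,|\!|\!| u_2 |\!|\!|^{n} \;=\; |\!|\!| w |\!|\!|\, \widetilde{\partial_x F}\!\left(|\!|\!| u_1 |\!|\!|,\,|\!|\!| u_2 |\!|\!|\right),$$
and the absolute convergence of the last series at every point of $\R^2$ is exactly the statement that $\widetilde{\partial_x F}$ is real entire, which was already established in the paragraph preceding the lemma. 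Rescaling back to $\|\cdot\|_X$ absorbs the constants into the implicit constant of $\lesssim$.

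I do not anticipate a genuine obstacle: the structure is identical to the proof of estimate \eqref{favest} in Theorem \ref{convs}, with the extra factor $w$ handled by one additional invocation of the algebra inequality and the coefficients $m\,a_{mn}$ (resp.\ $n\,a_{mn}$) already controlled by the real-entire character of the termwise derivative. The only point requiring a mild sleight of hand is absorbing the algebra constant $C$, which is addressed by the equivalent-norm device above.
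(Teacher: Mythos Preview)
Your proposal is correct and follows essentially the same route as the paper: multiply the termwise-differentiated series by $w$, apply the algebra inequality \eqref{algineq} term by term, and recognize the resulting majorant as $\|w\|_X\,\widetilde{\partial_xF}(\|u_1\|_X,\|u_2\|_X)$. You are in fact more careful than the paper about the accumulating algebra constant $C^{m+n-1}$, handling it via the equivalent submultiplicative norm; note only that after rescaling back the constant lands \emph{inside} the argument of $\widetilde{\partial_xF}$ rather than in front, so strictly speaking one obtains $\widetilde{\partial_xF}(C\|u_1\|_X,C\|u_2\|_X)$---a harmless discrepancy that the paper itself glosses over and that does not affect any of the downstream applications.
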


\begin{proof} We have already observed that $\partial_xF(x,y)$ and $\partial_y F(x,y)$ are real entire functions with absolutely convergent  power series expansions \eqref{dxf} and \eqref{dyf} valid  for all $(x,y) \in \R^2$. Now we observe that the series  $$\sum_{m\geq 1,n\geq 0} m \, a_{mn} \, w\,u_1^{m-1} \, u_2^n$$ is absolutely convergent in $X$ for every $w \in X$. In fact,  since $X$ is an algebra, we have  $$\| w \, u_1^{m-1} \,  u_2^n \|_X \lesssim \|w\|_X  \|u_1\|_X^{m-1} \, \| u_2\|_X^n$$ by \eqref{algineq}. It follows that $w\, \partial_x F(u_1,u_2) \in X$ and 

\begin{eqnarray}
\label{dxf3}
\|w\, \partial_x F(u_1,u_2)\|_X &\lesssim& \sum_{m\geq 1,n\geq 0} m |a_{mn}| \, \|w\|_X \, \|u_1\|_X^{m-1} \, \| u_2\|_X^n\\
& = & \|w\|_X \,\widetilde{\partial_xF}(\|u_1\|_X,\|u_2\|_X).\nonumber
\end{eqnarray}
Similarly, $w\, \partial_y F(u_1,u_2) \in X$ and 
\begin{eqnarray}
\label{dyf3}
\| w \, \partial_y F(u_1,u_2)\|_X & \lesssim & \sum_{m\geq 0,n\geq 1} n |a_{mn}| \, \|w\|_X \, \|u_1\|_X^{m} \, \| u_2\|_X^{n-1}\\
& = & \|w\|_X \, \widetilde{\partial_yF}(\|u_1\|_X,\|u_2\|_X).\nonumber
\end{eqnarray}
Hence, the lemma.  \end{proof}

The following proposition gives the essential estimate required to establish the contraction estimate. 
\begin{Proposition} \label{Fu-Fv}
Let $F$ be a real entire function on $\R^2$ and $X$ be the modulation space as in Lemma \ref{Lmdf}. Then we have \Bea \|F(u_1,u_2) \!\!\!\! &-&\!\!\!\! F(v_1,v_2)\|_X \\
&& \lesssim 2 \|u-v\|_X   \left[ 
  \left(\widetilde{\partial_x F} +\widetilde{\partial_y F} \right) (\|u\|_X+\|v\|_X, \|u\|_X+\|v\|_X) \right]\Eea 
for every $u,v\in X$.\end{Proposition}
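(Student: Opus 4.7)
The strategy is to compare the two power series termwise, via a telescoping identity that factors out $u-v$. Writing $F(x,y)=\sum_{m,n\ge 0}a_{mn}x^m y^n$ (with $a_{00}=0$ since it plays no role), one has formally
\[
F(u_1,u_2)-F(v_1,v_2)=\sum_{m,n}a_{mn}\bigl(u_1^m u_2^n-v_1^m v_2^n\bigr),
\]
and the standard algebraic identity
\[
u_1^m u_2^n-v_1^m v_2^n=(u_1-v_1)\!\sum_{k=0}^{m-1}u_1^k v_1^{m-1-k}u_2^n+(u_2-v_2)v_1^m\!\sum_{j=0}^{n-1}u_2^j v_2^{n-1-j}
\]
allows one to extract a factor of $u_1-v_1$ or $u_2-v_2$ from every summand.

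Next I would apply the algebra inequality \eqref{algineq} to each term. Using $\|u_i\|_X,\|v_i\|_X\le \|u\|_X+\|v\|_X=:R$ together with $\|u_1-v_1\|_X, \|u_2-v_2\|_X\le \|u-v\|_X$ (which follows from \eqref{reimineq}), each inner sum is bounded by $m R^{m-1+n}$ (resp.\ $nR^{m+n-1}$) since it contains $m$ (resp.\ $n$) terms each of size at most $R^{m-1}\cdot R^n$ (resp.\ $R^m\cdot R^{n-1}$). This yields
\[
\bigl\|a_{mn}(u_1^m u_2^n-v_1^m v_2^n)\bigr\|_X\lesssim |a_{mn}|\,\|u-v\|_X\bigl(mR^{m-1+n}+nR^{m+n-1}\bigr).
\]

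Summing over $m,n\ge 0$ and recognizing the series
\[
\sum_{m\ge1,n\ge0}m|a_{mn}|R^{m-1}R^n=\widetilde{\partial_xF}(R,R),\qquad \sum_{m\ge0,n\ge1}n|a_{mn}|R^mR^{n-1}=\widetilde{\partial_yF}(R,R),
\]
both of which are finite because $\partial_xF$ and $\partial_yF$ are real entire (as verified above the statement), gives the desired estimate
\[
\|F(u_1,u_2)-F(v_1,v_2)\|_X\lesssim \|u-v\|_X\bigl[(\widetilde{\partial_xF}+\widetilde{\partial_yF})(R,R)\bigr].
\]
The factor $2$ in the statement is then harmless; it appears naturally from the two halves of the telescoping split.

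The main point to be careful about is convergence: one must justify that the rearranged series $\sum a_{mn}(u_1^m u_2^n - v_1^m v_2^n)$ in fact converges absolutely in $X$, so that the estimate is not merely formal. This is taken care of by the majorant computation itself, since the bound produces precisely $\widetilde{\partial_xF}(R,R)+\widetilde{\partial_yF}(R,R)<\infty$, which shows the partial sums are Cauchy in the Banach space $X$ and hence converge to $F(u_1,u_2)-F(v_1,v_2)$ (this limit being correct pointwise, and uniqueness of limits in $X\hookrightarrow\mathcal S'(\R^n)$ closing the loop). No other subtlety arises.
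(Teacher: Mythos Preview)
Your proof is correct. The paper takes a slightly different route: rather than telescoping the power series termwise, it writes
\[
F(u_1,u_2)-F(v_1,v_2)=\int_0^1\Bigl[(u_1-v_1)\,\partial_xF+(u_2-v_2)\,\partial_yF\Bigr]\bigl(v_1+s(u_1-v_1),\,v_2+s(u_2-v_2)\bigr)\,ds,
\]
applies Minkowski's inequality and Lemma~\ref{Lmdf} under the integral, and then uses the monotonicity of $\widetilde{\partial_xF},\widetilde{\partial_yF}$ together with $\|v_i+s(u_i-v_i)\|_X\le\|u\|_X+\|v\|_X$ to dominate the integrand uniformly in $s$. Your argument is more elementary in that it never introduces the auxiliary parameter $s$ and in effect absorbs the content of Lemma~\ref{Lmdf} into the termwise estimate; the paper's approach is a bit more conceptual and packages the derivative estimate separately. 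Incidentally, your computation actually yields the bound without the factor $2$ (you bound $\|u_i-v_i\|_X\le\|u-v\|_X$ for $i=1,2$ individually), whereas in the paper the $2$ arises at the end from $\|u_1-v_1\|_X+\|u_2-v_2\|_X\le 2\|u-v\|_X$; as you note, this is immaterial under $\lesssim$.
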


\begin{proof}
Let $u,v \in X$, with $u=u_1 + iu_2$ and $v=v_1+iv_2$, where $u_1 = \text{Re}(u)$ and   $v_1 = \text{Re}(v)$. Using the formula  
$$ F(x, y)- F(x',y') = \int_0^1 \frac{d}{ds} \left[  F(x'+s(x-x'), y'+s(y-y'))\right] ds$$
for $x,x',y,y' \in \R$, we see that
\bea
&& \!\!\!\!\! \!\!\!\!\! \!\!\!\!\! \!\!\!\!\! F(u_1,u_2) - F(v_1,v_2) \\
&=& \int_{s=0}^1(u_1-v_1) \, \partial_xF (u_1+s(u_1-v_1), u_2+s(u_2-v_2)) \, ds \nonumber\\
&+&   \int_{s=0}^1(u_2-v_2) \, \partial_y F (u_1+s(u_1-v_1), u_2+s(u_2-v_2)) \, ds. \nonumber \eea
Taking norm on both sides and applying Minkowski's inequality for integral, and the Lemma \ref{Lmdf}, 
with $w= v_i-u_i, ~i=1,2$ we get,
\bea \label{fu-v}
&& \|F(u_1,u_2) - F(v_1,v_2)\|_X \\
&\lesssim &\|u_1-v_1\|_X \int_{s=0}^1 \widetilde{\partial_x F} (\|(u_1+s(u_1-v_1))\|_X, \|u_2+s(u_2-v_2)\|_X) ds \nonumber\\
&+&  \|u_2-v_2\|_X \int_{s=0}^1  \widetilde{\partial_y F} (\|u_1+s(u_1-v_1)\|_X, \|(u_2+s(u_2-v_2)\|_X) ds. \nonumber \eea
Note that 
\Bea \| u_i +s(v_i-u_i)\|_X \leq (1-s) \| u_i\|_X + s\| v_i\|_X \leq  \| u\|_X + \| v\|_X\Eea for $i=1,2$ in view of \eqref{reimineq}. Thus using the monotonicty of  
$\widetilde{\partial_xF}$ and $\widetilde{\partial_yF}$ on $[0, \infty)$ in each of its variables, the above integrands are dominated by 
\Bea    \left(\widetilde{\partial_x F} +\widetilde{\partial_y F} \right) (\|u\|_X+\|v\|_X, \|u\|_X+\|v\|_X).
\Eea
In view of these observations,  \eqref{fu-v} yields the estimate
\Bea \|F(u_1,u_2) &-& F(v_1,v_2)\|_X \lesssim ( \|u_1-v_1\|_X) + \|u_2-v_2\|_X) \\
&&\times   \left(\widetilde{\partial_x F} +\widetilde{\partial_y F} \right) (\|u\|_X+\|v\|_X, \|u\|_X+\|v\|_X).\Eea 
 Since $u_1 - v_1 = \text{Re} (u-v)$ and $u_2-v_2 = \text{Im} (u-v)$, the required inequality follows from this, in view of \eqref{reimineq}.
 \end{proof}

The estimates for the linear propagators associated to the Schr\"{o}dinger, the wave  and the Klein-Gordon equations are given by the multiplier theorems on modulation spaces $M^{p,q}_s(\R^n)$,  for three sets of multipliers listed below. 

For a bounded measurable function $\sigma$ on $\R^n$, let  $H_{\sigma}$ denote the Fourier multiplier operator given by
\bea \label{fmulti} H_{\sigma}f(x)= \int_{\R^n} \sigma(\xi)\, \widehat{f}(\xi) \, e^{2\pi i \xi \cdot x} \, d\xi,\eea
for $f \in {\mathcal S(\R^n)}$. The function $\sigma$ is called the multiplier. Here we are concerned with the following families of multipliers defined on $\R^n$: 

\begin{enumerate}
\item \label{mult1}$\sigma(\xi) = e^{-i t4\pi^2  |\xi|^2}$,
\item \label{mult2}$\sigma^{1} (\xi) = \sin(2 \pi t|\xi|)/ 2 \pi |\xi|, ~~~$  $\sigma^{2}(\xi)=\cos(2 \pi t|\xi|)$,
\item \label{mult3}$\mu_1(\xi)= \sin [t(1+|2 \pi \xi|^{2})^{1/2}]/ (1+|2 \pi \xi|^{2})^{1/2},~  \mu_2(\xi)= \cos [t(1+|2 \pi \xi|^{2})^{1/2}].$ \end{enumerate}

We use the following results, and  we refer to  \cite[Lemma 2.2]{ambenyi},  and also \cite[Theorem 1, Corollary 18]{benyi}, for the proof of these facts.

\begin{Proposition} 
\label{pe}

Let $\sigma $ be as in \eqref{mult1} and $H_{\sigma}$ be the Fourier multiplier as in (\ref{fmulti}). Then $H_{\sigma}$ extends to a bounded operator on $M^{p,q}_{s}(\R^n)$ for $1\leq p,q \leq \infty, s \geq 0$.
Moreover, $H_{\sigma}$ satisfy the inequality 
\bea \label{multest1}\| H_{\sigma } f\|_{M^{p,q}_{s}}  \leq c_n (1+t^{2})^{n/4} \|  f\|_{M^{p,q}_{s}}
 \eea
 for some constant $c_n$. \end{Proposition}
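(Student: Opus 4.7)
The plan is to identify the symbol $\sigma_t(\xi)=e^{-it 4\pi^2|\xi|^2}$ as a unimodular Fourier multiplier with quadratic phase, and then appeal to the general multiplier theorem for such symbols on modulation spaces due to B\'enyi, Gr\"ochenig, Okoudjou, and Rogers \cite{benyi}. The precise qualitative statement is that if $e^{i\mu}$ has phase $\mu\in C^\infty(\R^n;\R)$ satisfying $|\partial^\beta \mu(\xi)|\lesssim (1+|\xi|)^{\alpha-|\beta|}$ for all $|\beta|\geq 2$ with some $\alpha\in [0,2]$, then $H_{e^{i\mu}}$ is bounded on every $M^{p,q}_s(\R^n)$. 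For the Schr\"odinger symbol, $\mu_t(\xi)=-4\pi^2 t|\xi|^2$ is purely quadratic with constant Hessian and all higher derivatives vanishing, so the hypotheses hold trivially with $\alpha=2$, uniformly in $t$.

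To isolate the exact constant $c_n(1+t^2)^{n/4}$, I would work at the level of the short-time Fourier transform with the Gaussian window $g_0(x)=e^{-\pi|x|^2}$. Since the Schr\"odinger propagator preserves the class of complex Gaussians,
\begin{equation*}
e^{-it\Delta} g_0(x) = (1+4\pi i t)^{-n/2}\,e^{-\pi|x|^2/(1+4\pi i t)},
\end{equation*}
and more generally intertwines with time-frequency shifts according to the metaplectic representation of the symplectic group, one obtains an explicit formula relating $V_{g_0}(e^{-it\Delta}f)$ to $V_{g_0}f$ via a modulus factor $|1+4\pi it|^{-n/2}=(1+16\pi^2 t^2)^{-n/4}$, a symplectic (hence Jacobian-one) change of variables on phase space, and a change-of-window kernel whose $L^{1,1}$-mass is controlled by $(1+t^2)^{n/2}$. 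Combining these three contributions with the change-of-window inequality from Remark \ref{equidm} produces the claimed operator norm $(1+t^2)^{n/4}$ since the exponents combine as $-n/4+n/2=n/4$.

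The main obstacle is the careful execution of the metaplectic computation together with the explicit evaluation of the change-of-window kernel: the modulation space norm is a mixed $L^{p,q}$ norm with different exponents in the space and frequency directions, and one must show that the $t$-dependent symplectic change of coordinates does not upset this structure. Once these technicalities are handled, the uniform bound $\|H_{\sigma_t}f\|_{M^{p,q}_s}\leq c_n (1+t^2)^{n/4}\|f\|_{M^{p,q}_s}$ follows for all admissible $p,q,s$, because the window-change inequality from Remark \ref{equidm} and the metaplectic identities are insensitive to the choice of $p,q$ and to the weight $\langle w\rangle_s$ (the latter because $\sigma_t$ depends only on frequency and commutes with the weight).
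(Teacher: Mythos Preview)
The paper does not give a self-contained proof of this proposition; it simply records the estimate and refers the reader to \cite[Lemma~2.2]{ambenyi} and \cite[Theorem~1, Corollary~18]{benyi}. Your proposal is precisely a reconstruction of the argument found in those references: invoke the general unimodular-multiplier theorem of \cite{benyi} for qualitative boundedness, and for the explicit time growth $(1+t^2)^{n/4}$ use the Gaussian window $g_0$, the metaplectic intertwining $e^{it\Delta}M_wT_x=(\text{phase})\,M_wT_{x+ctw}\,e^{it\Delta}$ (so that the phase-space shear $(x,w)\mapsto(x-ctw,w)$ is absorbed by the inner $L^p(dx)$ integration), and then the window-change inequality of Remark~\ref{equidm} together with a direct Gaussian computation of $\|V_{g_0}(e^{-it\Delta}g_0)\|_{L^{1,1}}\sim(1+t^2)^{n/4}$. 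So your plan coincides with the paper's approach (by citation) and with the actual proofs in the cited sources.

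One small correction: your closing remark that ``$\sigma_t$ depends only on frequency and commutes with the weight'' is not quite the right justification for the case $s>0$, since in the STFT picture the weight $\langle w\rangle_s$ also lives on the frequency side and there is no literal commutation. The correct reason the exponent $n/4$ persists is that the phase-space shear leaves $w$ fixed, and the window-change kernel $V_{g_0}(e^{-it\Delta}g_0)$, when marginalised in $x$, is a Gaussian in $w$ of width independent of $t$; hence $\|V_{g_0}(e^{-it\Delta}g_0)\|_{L^{1,1}_s}$ differs from the unweighted $L^{1,1}$ norm only by a factor depending on $n,s$ but not on $t$.
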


\begin{Proposition}\label{wpe}
Let $\sigma^{1}$ and $\sigma^{2}$, be as in \eqref{mult2}.
Then the corresponding Fourier multiplier operators $H_{\sigma^{1}}, H_{\sigma^{2}}$ can be extended as a bounded operators on $M^{p,q}_{s}(\mathbb R^{n})$ for $1 \leq p,  q \leq \infty, s \geq 0$. Moreover, they satisfy the inequalities
\bea \label{multest2}\| H_{\sigma^i } f\|_{M^{p,q}_{s}}  \leq c_n(1+t^{2})^{n/4} \|  f\|_{M^{p,q}_{s}}. \eea
\end{Proposition}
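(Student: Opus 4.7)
The plan is to reduce the Fourier multiplier estimates for $\sigma^1$ and $\sigma^2$ to the boundedness results for the exponential multipliers $e^{\pm 2\pi i t|\xi|}$ on modulation spaces, due to B\'enyi-Gr\"ochenig-Okoudjou-Rogers \cite{benyi}. There, multipliers of the form $e^{it|\xi|^a}$ with $0 \leq a \leq 2$ are shown to be bounded on $M^{p,q}_s(\R^n)$ with operator norm growing at most like $c_n (1+t^2)^{n/4}$; the case $a=1$ is precisely what is needed here.

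The estimate for $\sigma^2$ is then immediate from Euler's identity: since
\begin{eqnarray*}
\cos(2\pi t|\xi|) = \tfrac{1}{2}\bigl(e^{2\pi i t|\xi|} + e^{-2\pi i t|\xi|}\bigr),
\end{eqnarray*}
$H_{\sigma^2}$ is an average of two bounded multiplier operators, each satisfying a bound of the form \eqref{multest1} with $a=1$, and the triangle inequality yields the asserted bound \eqref{multest2}.

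For $\sigma^1(\xi) = \sin(2\pi t|\xi|)/(2\pi|\xi|)$, the analogous decomposition produces an additional factor $1/|\xi|$, which is singular at the origin. To deal with this, I would introduce a smooth cutoff $\chi \in C_c^\infty(\R^n)$ with $\chi \equiv 1$ in a neighbourhood of $0$, and split
\begin{eqnarray*}
\sigma^1(\xi) = \chi(\xi)\,\sigma^1(\xi) + (1-\chi(\xi))\,\sigma^1(\xi).
\end{eqnarray*}
Since $\sin(2\pi t|\xi|)/(2\pi|\xi|)$ has a removable singularity at the origin (with value $t$), the first piece is smooth and compactly supported, so its inverse Fourier transform is Schwartz and it defines a bounded multiplier on $M^{p,q}_s$. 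The second piece factors as
\begin{eqnarray*}
\frac{1-\chi(\xi)}{4\pi i\,|\xi|}\bigl(e^{2\pi i t|\xi|} - e^{-2\pi i t|\xi|}\bigr),
\end{eqnarray*}
in which $(1-\chi(\xi))/|\xi|$ is smooth with bounded derivatives on $\R^n$, and when combined with the exponential factors it again gives a bounded multiplier on $M^{p,q}_s$ via \cite{benyi}.

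The main obstacle is tracking the $t$-dependence uniformly so that the combined estimate is exactly $c_n (1+t^2)^{n/4}$. The dominant contribution comes from the exponential multipliers handled by \cite{benyi}, while the cutoff piece contributes a term of lower order in $t$ that is absorbed into the main estimate; the details proceed along the lines of \cite[Lemma 2.2]{ambenyi} and \cite[Theorem 1, Corollary 18]{benyi}.
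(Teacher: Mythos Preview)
The paper does not give its own proof of this proposition; it simply states the result and refers the reader to \cite[Lemma 2.2]{ambenyi} and \cite[Theorem 1, Corollary 18]{benyi} for the proof. Your sketch is therefore more detailed than what the paper actually provides, and it points to precisely the same references for the underlying multiplier bounds on $e^{\pm 2\pi i t|\xi|}$.

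One remark on your outline: the decomposition for $\sigma^1$ is standard, but your claim that the low-frequency piece $\chi(\xi)\sigma^1(\xi)$ contributes only a ``lower order'' term in $t$ needs care. Pointwise, $|\sin(2\pi t|\xi|)/(2\pi|\xi|)|\le |t|$, and for $n=1$ the target growth $(1+t^2)^{1/4}\sim |t|^{1/2}$ is \emph{slower} than $|t|$, so a naive $L^\infty$-type bound on this piece would not be absorbed. The actual argument in \cite{ambenyi,benyi} handles the $t$-dependence more delicately (e.g.\ by scaling or by treating the full symbol $\sigma^1$ directly rather than splitting), and since you already defer to those references for the details, your sketch is fine as a roadmap. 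Just be aware that ``lower order in $t$'' is not literally correct for all $n$ without further work.
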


\begin{Proposition}
\label{kgpe}
Let $\mu_1 $ and $\mu_2$ be as in \eqref{mult3}. Then the Fourier multiplier operators $H_{\mu_i}, i=1,2$ can be extended as  a bounded operators on $M^{p,q}_{s}(\mathbb R^{n}),$ for $1 \leq p,  q \leq \infty, s \geq 0$.  Moreover, these operators  satisfy the inequalities
\bea  \label{multest3}\| H_{\mu_i } f\|_{M^{p,q}_{s}}  \leq c_n (1+t^{2})^{n/4} \|  f\|_{M^{p,q}_{s}}. \eea
\end{Proposition}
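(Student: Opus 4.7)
The argument parallels the template used to prove Propositions \ref{pe} and \ref{wpe}, which are cited from \cite{benyi, ambenyi}. First I would rewrite the Klein--Gordon multipliers as linear combinations of the pure phase exponentials
\[
\mu_2(\xi) = \tfrac{1}{2}\bigl(e^{it\langle 2\pi\xi\rangle} + e^{-it\langle 2\pi\xi\rangle}\bigr), \qquad \mu_1(\xi) = \frac{1}{2i\,\langle 2\pi\xi\rangle}\bigl(e^{it\langle 2\pi\xi\rangle} - e^{-it\langle 2\pi\xi\rangle}\bigr),
\]
where $\langle 2\pi\xi\rangle = (1+|2\pi\xi|^2)^{1/2}$. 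The Bessel factor $1/\langle 2\pi\xi\rangle$ appearing in $\mu_1$ is the Fourier transform of an $L^1(\R^n)$ Bessel kernel, so by Proposition \ref{convoreslt} and its routine weighted analogue (using that the weight $\langle w\rangle_s$ is submultiplicative) the associated Fourier multiplier is bounded on $M^{p,q}_s$ with operator norm independent of $t$. It therefore suffices to prove $\|H_{\sigma_t^\pm}f\|_{M^{p,q}_s}\leq c_n(1+t^2)^{n/4}\|f\|_{M^{p,q}_s}$ for the pure phase symbols $\sigma_t^\pm(\xi)= e^{\pm it\langle 2\pi\xi\rangle}$.

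For that reduction I would invoke the frequency-uniform (``isometric'') decomposition of $M^{p,q}_s$ from \cite{benyi}: choose a smooth partition of unity $\{\eta_k\}_{k\in\Z^n}$ subordinate to the unit cubes $Q_k = k+[-1/2,1/2]^n$, set $\Box_k f = \mathcal{F}^{-1}[\eta_k \widehat f\,]$, and use the norm equivalence
\[
\|f\|_{M^{p,q}_s}\;\asymp\; \Big(\sum_{k\in\Z^n}\langle k\rangle_s^{\,q}\,\|\Box_k f\|_{L^p}^{q}\Big)^{1/q}.
\]
Since $H_{\sigma_t^\pm}$ is a Fourier multiplier, it commutes with each $\Box_k$. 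Choosing a slightly enlarged cutoff $\tilde\eta_k$ with $\tilde\eta_k\eta_k=\eta_k$, we have $H_{\sigma_t^\pm}\Box_k f = K_k^{\pm}\ast \Box_k f$ where $K_k^{\pm}= \mathcal{F}^{-1}\bigl[\tilde\eta_k(\xi)\,e^{\pm it\langle 2\pi\xi\rangle}\bigr]$. Young's inequality then reduces the whole estimate to the uniform kernel bound
\[
\sup_{k\in\Z^n}\,\|K_k^{\pm}\|_{L^1(\R^n)}\;\leq\; c_n\,(1+t^2)^{n/4}.
\]

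The main obstacle is this uniform $L^1$-kernel estimate on the unit cubes. It is obtained by a standard stationary/non-stationary phase analysis, exploiting that the Klein--Gordon phase $\phi(\xi)=\langle 2\pi\xi\rangle$ has globally uniformly bounded Hessian (indeed $\partial^2_{ij}\phi(\xi)=O(\langle\xi\rangle^{-1})$) together with analogous uniform symbolic control of all higher derivatives on each cube $Q_k$. The Hessian bound furnishes, via integration by parts in the non-stationary regime and van der Corput/Morse-type stationary phase estimates in the stationary regime, the $L^1$ bound with the precise power $(1+t^2)^{n/4}$; this is essentially Lemma 2.2 of \cite{ambenyi} (see also Theorem 14 of \cite{benyi}). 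One mild simplification over the wave case treated in Proposition \ref{wpe} is that $\langle\xi\rangle$ is smooth at the origin, so no delicate separate treatment of the cube $Q_0$ containing $\xi=0$ is required. Combining the kernel bound with the reduction above yields \eqref{multest3}.
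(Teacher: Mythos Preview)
The paper does not supply its own proof of this proposition: it simply records the statement and refers the reader to \cite[Lemma 2.2]{ambenyi} and \cite[Theorem 1, Corollary 18]{benyi} for the argument. Your proposal is therefore not in conflict with anything in the paper; rather, you have written out a correct outline of precisely the argument contained in those references (reduction to the unimodular phases $e^{\pm it\langle 2\pi\xi\rangle}$, frequency-uniform decomposition, and the uniform $L^1$ kernel bound on unit cubes via stationary phase). One small remark: your justification for the weighted analogue of Proposition \ref{convoreslt} via submultiplicativity of $\langle w\rangle_s$ is unnecessary, since the weight acts only in the $w$-variable while the $L^1$ convolution acts only in the $x$-variable, so the same two-line proof as in Proposition \ref{convoreslt} goes through verbatim with the weight present.
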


Now we proceed to prove the well-posedness results, starting with nonlinear Schr\"{o}dinger equation.

\subsection{The nonlinear Schr\"odinger equation}

\begin{Theorem}
\label{nlsrt}
Assume that $u_{0}\in X$ and the nonlinearity $F$ has the form  \eqref{nlf}. Then, there exists 
$T_*=T_*(\|u_{0}\|_{X})<t_0$ and $T^{*}=T^{\ast}(\|u_{0}\|_{X})>t_0$ such that \eqref{nls} has a unique solution $u\in C([T_*, T^{*}];X).$ Moreover, if $|T^{\ast}| < \infty$ then $\limsup_{t\to T^{\ast}} \|u(\cdot, t)\|_{X} = \infty.$ Similarly $\limsup_{t\to T_{\ast}} \|u(\cdot, t)\|_{X} = \infty$,  if $|T_{\ast}|< \infty$.\end{Theorem}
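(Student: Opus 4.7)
My approach is the standard contraction mapping argument applied to Duhamel's formula, using the multiplier estimate from Proposition \ref{pe} together with the algebraic estimates from Theorem \ref{convs} and Proposition \ref{Fu-Fv}.

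First I would rewrite \eqref{nls} as the integral equation
\[
u(t) = S(t-t_0)u_0 - i\int_{t_0}^{t} S(t-\tau)\, F(u(\tau))\, d\tau, \qquad S(t) := e^{it\Delta}.
\]
Fix $T>0$ (to be chosen) and let $I_T = [t_0-T, t_0+T]$. On the Banach space $Y_T := C(I_T; X)$ equipped with $\|u\|_{Y_T}=\sup_{t\in I_T}\|u(t)\|_X$, define
\[
(\Phi u)(t) := S(t-t_0)u_0 - i\int_{t_0}^{t} S(t-\tau)\, F(u(\tau))\, d\tau.
\]
The plan is to choose $M>0$ and $T>0$ so that $\Phi$ sends the closed ball $B_M := \{u\in Y_T : \|u\|_{Y_T}\leq M\}$ into itself and is a strict contraction on it; a fixed point of $\Phi$ is then the unique local solution.

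For the self-mapping step, I apply Proposition \ref{pe} pointwise in $\tau$ and use Theorem \ref{convs} (noting $F(0)=0$, as $F$ is real entire of the form \eqref{nlf}) to estimate
\[
\|(\Phi u)(t)\|_X \;\lesssim\; (1+T^2)^{n/4}\|u_0\|_X + (1+T^2)^{n/4}\, T\cdot \tilde{F}(M,M),
\]
where I have used $\|F(u(\tau))\|_X \lesssim \tilde{F}(\|u_1(\tau)\|_X,\|u_2(\tau)\|_X) \leq \tilde F(M,M)$ via \eqref{reimineq} and the monotonicity of $\tilde F$ on $[0,\infty)^2$. Choosing $M = 2c_n(1+T^2)^{n/4}\|u_0\|_X$ and then $T$ small enough that $c_n T(1+T^2)^{n/4}\tilde{F}(M,M)\leq \tfrac{1}{2}M$ ensures $\Phi(B_M)\subset B_M$. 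Since $\tilde F(M,M)\to 0$ as $M\to 0$ (because $F(0)=0$ forces $a_{00}=0$), this smallness condition can indeed be met; the explicit size of $T$ depends only on $\|u_0\|_X$.

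The contraction estimate is the same calculation, using Proposition \ref{Fu-Fv} in place of Theorem \ref{convs}: for $u,v\in B_M$,
\[
\|\Phi u(t) - \Phi v(t)\|_X \;\lesssim\; (1+T^2)^{n/4}\, T \cdot 2\|u-v\|_{Y_T}\cdot \bigl(\widetilde{\partial_xF}+\widetilde{\partial_yF}\bigr)(2M,2M).
\]
Shrinking $T$ further (still depending only on $\|u_0\|_X$) makes the prefactor at most $\tfrac{1}{2}$, so $\Phi$ is a contraction on $B_M$. Banach's fixed point theorem then yields a unique $u\in C(I_T;X)$ solving the integral equation, and continuity of $t\mapsto u(t)$ in $X$ is inherited from the continuity of the propagator and of the Bochner integral. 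Uniqueness on arbitrary subintervals follows from the same contraction estimate applied locally. I would denote $T_* := t_0 - T$, $T^* := t_0 + T$ as the initial existence endpoints.

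Finally, the blow-up alternative is standard. Let $T^*$ be the supremum of all $\widetilde T>t_0$ such that a solution exists in $C([t_0,\widetilde T];X)$; similarly define $T_*$ as the infimum toward the past. Suppose $T^*<\infty$ and $\limsup_{t\uparrow T^*}\|u(t)\|_X = K<\infty$. Then picking $t_1<T^*$ sufficiently close to $T^*$ with $\|u(t_1)\|_X\leq K+1$, the local existence result above, applied with initial time $t_1$ and initial datum $u(t_1)$, produces a solution on $[t_1-\delta,t_1+\delta]$ with $\delta = \delta(K+1)>0$ independent of $t_1$. Gluing this extension to $u$ past $T^*$ contradicts the maximality of $T^*$; hence $\limsup_{t\uparrow T^*}\|u(t)\|_X=\infty$. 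The argument for $T_*$ is identical. The main delicate point is ensuring that all constants in the self-mapping and contraction estimates depend only on $\|u_0\|_X$ (and on $T$ monotonically), which is guaranteed by the explicit form of $\tilde F$ and $\widetilde{\partial_x F}, \widetilde{\partial_y F}$ from Section 4.
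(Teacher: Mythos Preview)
Your proposal is correct and follows essentially the same route as the paper: Duhamel's formula, the multiplier bound of Proposition \ref{pe}, the estimate \eqref{favest} from Theorem \ref{convs} for the self-map, Proposition \ref{Fu-Fv} for the contraction, and the standard extension/blow-up argument. The only cosmetic differences are that the paper normalizes to $t_0=0$ and treats forward and backward intervals separately, fixes $T\le 1$ first so that $C_T\le C_1$ and then takes $M=2C_1\|u_0\|_X$ independent of $T$ (avoiding the slight circularity in your choice $M=2c_n(1+T^2)^{n/4}\|u_0\|_X$), and factors $\tilde F(x,x)=x\,G(x)$ rather than invoking $\tilde F(M,M)\to 0$; none of this affects the substance.
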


\begin{proof}
We start by noting that \eqref{nls} can be written in the equivalent form
\begin{equation}\label{df}
u(\cdot, t)= S(t-t_0)u_{0}-i\mathcal{A}F(u)
\end{equation}
where
\begin{equation}
S(t)=e^{it\bigtriangleup}, \  (\mathcal{A}v)(t,x)=\int_{t_0}^{t}S(t-t_0- \tau)\, v(t,x) \, d\tau.
\end{equation}
This equivalence is valid in the space of tempered distributions on $\R^n$. 
For simplicity, we assume that $t_0=0$ and prove the local existence on $[0,T]$. Similar arguments also applies to interval of the form $[-T',0]$ for proving local solutions. 

We show that the equivalent integral equation (\ref{df}) has a unique solution, by showing that  
the mapping \begin{equation} \label{inteq}
\mathcal{J}(u)= S(t)u_{0}-i\int_{0}^{t}S(t-\tau) \, [F(u(\cdot, \tau)) ] \, d\tau.
\end{equation}
has a unique fixed point in an appropriate functions space, for small $t$. 
For this, we consider the Banach space $X_{T}=C([0, T]; X)$, with norm
$$\left\|u\right\|_{X_{T}}=\sup_{t\in [0, T]}\left\|u(\cdot, t)\right\|_{X}, \ (u\in X_{T}).$$ 
By the Fourier multiplier estimate \eqref{multest1}
in  Proposition \ref{pe} we see that
\begin{equation*}
\left\|S(t)u_{0}\right\|_{X} \leq c_n (1+ t^{2})^{n/4} \left\|u_{0}\right\|_{X}
\end{equation*}
for $ t \in \R$. It follows that, for $0\leq t \leq	 T$
\begin{equation}
\label{eb}
\left\|S(t)u_{0}\right\|_{X_T} \leq C_{T} \left\|u_{0}\right\|_{X_T} 
\end{equation}
  with $C_{T}=  c_n (1+T^{2})^{n/4}.$

Also, note that if $u \in X_T$, then $u(\cdot, t) \in X$ for each $ t \in[0,  T]$. Hence
by the estimate (\ref{favest}) of Theorem \ref{convs}, $F(u (\cdot,t)) \in X$ and we have 
\bea  \label{xxtest} \| F(u (\cdot,t)) \|_X  &\leq&  \tilde{F}(\|u(\cdot,t)\|_X ,\|u(\cdot,t)\|_X )\\ \nonumber
&\leq &  \tilde{F}(\|u\|_{X_T} ,\|u\|_{X_T} ),\eea where the last inequality follows from 
the fact that $\tilde{F}$, is monotonically increasing on $[0,\infty) \times [0,\infty)$ with respect to each of its 
variables.

Now an application of Minkowski's inequality for integrals, the Fourier multiplier estimate \eqref{multest1} and the estimate \eqref{xxtest}, yields 
\begin{eqnarray}
\label{ed}
\left\| \int_{0}^{t} S(t-\tau) [F(u(\cdot, \tau))]  \, d\tau \right\|_{X} 
& \leq & \int_{0}^{t} \left\|S(t-\tau) [F(u(\cdot, \tau))] \right\|_{X}  \, d\tau \nonumber\\
   &\leq & T C_{T} \, \tilde{F}(\|u\|_{X_T} ,\|u\|_{X_T} )
   \end{eqnarray}
for $0\leq t \leq T$.  Using the estimates \eqref{eb} and \eqref {ed} in \eqref{inteq}, we see that 
\bea
\label{ee}
\left\|\mathcal{J}(u)\right\|_{X_{T}}&\leq& C_{T} \left( \left\|u_{0}\right\|_{X} + T  \tilde{F}(\|u\|_{X_T}, \|u\|_{X_T})\right) \nonumber \\ 
& \leq & C_{T} \left(  \left\|u_{0}\right\|_{X} + T \, \|u\|_{X_T} G(\|u\|_{X_T})  \right)
\eea
where $G$ is a real analytic function on $[0,\infty)$ such that $\tilde{F}(x,x)= x\, G(x)$.
This factorisation follows  from the fact that the constant term in the power series expansion 
for $\tilde{F}$  is zero, (i.e., $\tilde{F}(0,0)=0)$. We also note that $G$ is increasing on $[0,\infty)$.

 For $M>0$, put  $X_{T, M}= \{u\in X_{T}:\|u\|_{X_{T}}\leq M \}$,  which is the  closed ball  of radius $M$, and centered at the origin in  $X_{T}$.  We claim that 
$$\mathcal{J}:X_{T, M}\to X_{T, M},$$
for suitable choice of  $M$ and small $T>0$. 
Note that $C_T \leq C_1$ for $0<T\leq 1$. Hence, putting $M=2 C_{1}\left\|u_{0}\right\|_{X}$, from \eqref{ee} we see that for $u\in X_{T, M}$ and $T\leq 1$
\begin{eqnarray} \label{mapto} 
\left\|\mathcal{J}(u)\right\|_{X_{T}} & \leq & \frac{M}{2}+ T C_1 \,  MG(M) \leq M
\end{eqnarray} 
for  $ T \leq T_1$, where 
\begin{eqnarray} \label{T1} 
T_1= \min \left\{ 1, \frac{1}{2C_1G(M)} \right\}.
\end{eqnarray}
Thus  $\mathcal{J}:X_{T, M}\to X_{T, M},$ for $M=2 C_{1}\left\|u_{0}\right\|_{X}$, and all  $T\leq  T_1$, hence the claim.
  
Now we show that ${\mathcal J}$ satisfies the  contraction estimate 
\bea \label{contra}
\|\mathcal{J}(u)- \mathcal{J}(v)\|_{X_{T}} \leq \frac{1}{2} \|u-v\|_{X_T} \eea
on $X_{T,M}$ if $T$ sufficiently small.

From \eqref{inteq}
and the estimate \eqref{fmulti} in Proposition \ref{pe}, we see that 
\bea\label{ce}
\|\mathcal{J}(u(\cdot, t)- \mathcal{J}(v(\cdot,t))\|_X &\leq& 
\int_{0}^{t}\|S(t-\tau) \, [F(u(\cdot, \tau)) -F(v(\cdot,\tau))]\|_X  \, d\tau. \nonumber \\
&\leq & C_t \int_{0}^{t}\|F(u(\cdot, \tau)) -F(v(\cdot,\tau))\|_X  \, d\tau,
\eea
since $C_{t-\tau}\leq C_t$. By proposition \ref{Fu-Fv} this is atmost $$2 C_t \int_0^t  \|u-v\|_X   \left[ 
  \left(\widetilde{\partial_x F} +\widetilde{\partial_y F} \right) (\|u\|_X+\|v\|_X, \|u\|_X+\|v\|_X) \right] d \tau.$$ Now taking supremum over all $t \in [0,T]$, we see that 
 \bea\label{ju-jv}
 \|\mathcal{J}(u) &-& \mathcal{J}(v)\|_{X_T} \nonumber \\
&\leq& 2TC_T \|u-v\|_{X_T} \left( \widetilde{\partial_x F} +\widetilde{\partial_y F} \right) (\|u\|_{X_T}+\|v\|_{X_T}, \|u\|_{X_T}+\|v\|_{X_T}).\eea
Now if $u$ and $v$ are in $X_{T,M}$, the RHS of \eqref {ju-jv} is at most 
\bea\label{contra28}  2T C_T\|u-v\|_{X_T}  \left( \widetilde{\partial_x F} +\widetilde{\partial_y F} \right) (2M, 2M)\leq  
\frac{ \|u-v\|_{X_T} }{2}\eea
for all $T\leq T_2$, where 
\bea\label{T2}
T_2= \min \left\{1, \left[4 C_1 \left( \widetilde{\partial_x F} +\widetilde{\partial_y F} \right) (2M, 2M)\right]^{-1} \right\}.
\eea

Thus from \eqref{contra28}, we see that the estimate \eqref{contra} holds for all $T<T_2$.  
Now choosing $T^1 = \min \{ T_1,T_2\}$ where $T_1$ is given by  \eqref{T1}, so that
both the inequalities \eqref{mapto} and \eqref{contra} are valid for $T<T^1$. Hence for such
a choice of $T$, ${\mathcal J} $ is a contraction on the Banach space $X_{T,M}$ and hence has a unique fixed point in $X_{T,M}$, by the  Banach's contraction mapping principle. Thus  we conclude that $\mathcal{J}$ has a unique fixed point in $X_{T, M}$ which is a solution of \eqref{df} on $[0,T]$ for any $T<T^1$. Note that 
$T^1$ depends on $\|u_0\|_X$.

The arguments above also give the solution for the initial data corresponding to any given time $t_0$, on an interval $[t_0, t_0 +T^1]$ where $T^1$ is given by the same formula with  $\|u(0)\|_X$ replaced by $\|u(t_0)\|_X$. In other words, the dependence of  the length of the interval of existence on the initial time $t_0$, is only through the norm $\|  u(t_0)\|$. Thus if the solution exists on $[0,T']$ and if $\|u(T')\|_X<\infty$, the above arguments can be carried out again for the initial value problem with the new initial data $u(T')$ to extend the solution  to the larger interval $[0, T'']$. This procedure can be continued and hence we get a solution on maximal interval $[0,T^*]$ having the  following blow up alternative: Either $\| u(\cdot , T^*)\|_X=\infty$ or $\lim_{t\to T^*} \|u(\cdot, t)\|_X = \infty.$ 

Similar arguments can be carried out, to extend the solution to a maximal intervals to the left, of the form 
$[T_*, 0]$. This gives the blow up alternative. 

The uniqueness also follows from the uniqueness of the fixed point for ${\mathcal J}$.
This completes the proof.
\end {proof}

By similar arguments, using the multiplier estimates given in Proposition
\ref{wpe}, Proposition \ref{kgpe}, and using the Proposition \ref {Fu-Fv} to prove contraction estimates,
we can establish analogous local well-posedness results, for the initial value problems for the  wave equation and the Klein-Gordon equation. Instead of repeating the arguments, we only indicate the equivalent 
integral equation in terms of the one parameter groups involved, and the relevant estimates, to carry out the proof as above.

\subsection{The nonlinear wave equation}

\begin{Theorem}
\label{pnlw}
Assume that $u_{0}, u_{1}\in X$ and the nonlinearity $F$ has the form  \eqref{nlf}.  Then, there exists $T^{*}=T^{\ast}(\|u_{0}\|_{X}, \|u_{1}\|_{X})$ such that \eqref{nlw} has a unique solution $u\in C([0, T^{*}]; X).$ Moreover, if $T^{\ast} < \infty$, then $\limsup_{t\to T^{\ast}} \|u(\cdot, t)\|_{X} = \infty.$

\end{Theorem}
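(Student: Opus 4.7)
The plan is to mimic the proof of Theorem \ref{nlsrt} for NLS, with the Schr\"odinger propagator $S(t)=e^{it\Delta}$ replaced by the wave propagators built from the multipliers $\sigma^1,\sigma^2$ of \eqref{mult2}. Writing $W_1(t):=H_{\sigma^1}$ and $W_2(t):=H_{\sigma^2}$ (the ``sine'' and ``cosine'' propagators), Duhamel's principle recasts \eqref{nlw} as the integral equation
\begin{equation*}
u(\cdot,t)=W_2(t)u_0+W_1(t)u_1+\mathcal{A}F(u),\qquad (\mathcal{A}v)(t,x):=\int_0^t W_1(t-\tau)\,v(\tau,x)\,d\tau,
\end{equation*}
valid in $\mathcal{S}'(\R^n)$. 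It suffices to produce a fixed point for
\begin{equation*}
\mathcal{J}(u)(t):=W_2(t)u_0+W_1(t)u_1+\int_0^t W_1(t-\tau)F(u(\cdot,\tau))\,d\tau
\end{equation*}
in an appropriate closed ball of the Banach space $X_T=C([0,T];X)$ equipped with the sup norm $\|u\|_{X_T}=\sup_{t\in[0,T]}\|u(\cdot,t)\|_X$.

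The first step is to estimate the linear part. Proposition \ref{wpe} yields $\|W_i(t)f\|_X\leq c_n(1+t^2)^{n/4}\|f\|_X$, so for $0\leq t\leq T\leq 1$ one gets
\begin{equation*}
\|W_2(t)u_0+W_1(t)u_1\|_{X_T}\leq C_1\bigl(\|u_0\|_X+\|u_1\|_X\bigr),\qquad C_1=c_n\,2^{n/4}.
\end{equation*}
For the Duhamel term, I would combine Minkowski's inequality for integrals with the same multiplier estimate and the real-entire estimate \eqref{favest} from Theorem \ref{convs} to get
\begin{equation*}
\Bigl\|\int_0^t W_1(t-\tau)F(u(\cdot,\tau))\,d\tau\Bigr\|_X\leq TC_T\,\tilde{F}\bigl(\|u\|_{X_T},\|u\|_{X_T}\bigr),
\end{equation*}
using that $\tilde F$ is monotone on $[0,\infty)^2$. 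Factoring $\tilde F(x,x)=x\,G(x)$ with $G$ real analytic and increasing (permissible since $F(0)=0$), and setting $M:=2C_1(\|u_0\|_X+\|u_1\|_X)$, I can arrange $\|\mathcal{J}u\|_{X_T}\leq M$ on the ball $X_{T,M}:=\{u\in X_T:\|u\|_{X_T}\leq M\}$ provided $T\leq T_1:=\min\{1,1/(2C_1 G(M))\}$.

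The second step is the contraction estimate. Starting from
\begin{equation*}
\mathcal{J}(u)(t)-\mathcal{J}(v)(t)=\int_0^t W_1(t-\tau)\bigl[F(u(\cdot,\tau))-F(v(\cdot,\tau))\bigr]d\tau,
\end{equation*}
Proposition \ref{wpe} and Proposition \ref{Fu-Fv} (applied with the same monotonicity of $\widetilde{\partial_xF}+\widetilde{\partial_yF}$) give, for $u,v\in X_{T,M}$,
\begin{equation*}
\|\mathcal{J}u-\mathcal{J}v\|_{X_T}\leq 2TC_T\,\|u-v\|_{X_T}\bigl(\widetilde{\partial_xF}+\widetilde{\partial_yF}\bigr)(2M,2M).
\end{equation*}
Choosing $T^\ast\leq\min\{T_1,T_2\}$ with $T_2:=\min\bigl\{1,\bigl[4C_1(\widetilde{\partial_xF}+\widetilde{\partial_yF})(2M,2M)\bigr]^{-1}\bigr\}$ makes $\mathcal{J}$ a $\tfrac12$-contraction on $X_{T,M}$, and Banach's contraction principle produces a unique fixed point, i.e.\ the desired solution $u\in C([0,T^\ast];X)$.

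Finally, the blow-up alternative follows by the standard continuation argument: since $T^\ast$ depends on the initial data only through $\|u_0\|_X+\|u_1\|_X$, the solution can be restarted from time $t_1<T^\ast$ with data $u(\cdot,t_1)$ and $\partial_t u(\cdot,t_1)$, extending it until the $X$-norm blows up. I do not foresee any genuinely hard step here: the sole nontrivial ingredient beyond the NLS proof is the presence of $\partial_t u(\cdot,t_1)$, which must itself be controlled in $X$ in order to iterate; this is automatic because $\partial_t W_2(t)=-4\pi^2 H_{|\xi|\sin(2\pi t|\xi|)}$ and $\partial_t W_1(t)=H_{\cos(2\pi t|\xi|)}$ are again Fourier multipliers to which Proposition \ref{wpe} (or the elementary $M^{p,q}_s$-boundedness for $s\geq 1$ together with the embedding $M^{p,q}_s\hookrightarrow M^{p,q}$) applies, together with the Duhamel derivative $\partial_t\mathcal{A}F(u)=\int_0^t W_2(t-\tau)F(u(\cdot,\tau))d\tau$ which obeys the same estimates as $\mathcal{A}F$ itself.
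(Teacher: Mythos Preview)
Your approach is exactly that of the paper: Duhamel's formula with the cosine and sine propagators, contraction mapping on a ball in $X_T=C([0,T];X)$, using Proposition~\ref{wpe} for the linear estimates and Theorem~\ref{convs}/Proposition~\ref{Fu-Fv} for the nonlinear terms. The paper's own proof is in fact much terser than yours---it writes down the three estimates and then just invokes ``the standard contraction mapping argument''---so your level of detail exceeds what the paper provides.

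There is one genuine issue in your final paragraph, however. You correctly notice that the continuation argument for the wave equation requires control of $\partial_t u(\cdot,t_1)$ in $X$ as well as $u(\cdot,t_1)$, a point the paper passes over in silence. But your proposed fix does not work as stated: differentiating $W_2(t)=H_{\cos(2\pi t|\xi|)}$ in $t$ produces the multiplier $-2\pi|\xi|\sin(2\pi t|\xi|)$, which is \emph{unbounded} in $\xi$, so Proposition~\ref{wpe} does not apply to it, and invoking ``$M^{p,q}_s$-boundedness for $s\geq 1$'' would cost a derivative and take you out of $X$ when $X=M^{p,1}_s$ with $s=0$. In other words, $\partial_t u$ need not lie in $X$ itself but rather in a space one derivative weaker. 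The honest statement of the blow-up alternative for the wave equation should track the pair $(\|u(\cdot,t)\|_X,\|\partial_t u(\cdot,t)\|_{X'})$ for the appropriate $X'$; as written, both the paper and your sketch leave this point unresolved.
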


\begin{proof}
Equation \eqref{nlw} can be written in the equivalent form

\begin{equation}
u(\cdot, t)= \tilde{K}(t)u_{0} +K(t)u_{1}- \mathcal{B}F(u)
\end{equation}
where
$$
K(t)=\frac{\sin (t\sqrt{-\bigtriangleup})}{\sqrt{-\bigtriangleup}}, \tilde{K} (t) =\cos (t\sqrt{-\bigtriangleup} ), (\mathcal{B}v)(t,x)= \int_{0}^{t} K(t-\tau)v(\tau, x) d\tau.
$$
Consider the mapping
$$\mathcal{J}(u)=\tilde{K}(t)u_{0}+ K(t)u_{1}- \mathcal{B}F(u).$$
By using Proposition \ref{wpe} for the first two inequalities below, and the estimate \ref{favest} for the last inequality, we can write,
\begin{equation}
\begin{cases}
\|\tilde{K}(t)u_{0}\|_{X} \leq C_{T} \|u_{0}\|_{X},\\
\|K(t)u_{1}\|_{X} \leq C_{T} \|u_{1}\|_{X},\\
\|\mathcal{B}F(u)\|_{X} \leq  T C_{T} \tilde{F}( \|u\|_{X},  \|u\|_{X}),
\end {cases}
\end{equation}
where $C_T$ is some constant times $(1+T^2)^{n/4}$, as before. Thus the standard contraction mapping argument can be applied to $\mathcal{J}$ to complete the proof.
\end{proof}

\subsection{The nonlinear Klein-Gordon equation}

\begin{Theorem}
\label{pkg}
Assume that $u_{0}, u_{1}\in X$ and the nonlinearity $F$ has the form  \eqref{nlf}. Then, there exists $T^{*}=T^{\ast}(\|u_{0}\|_{X},\|u_{1}\|_{X})$ such that \eqref{nlkg} has a unique solution $u\in C([0, T^{*}];X).$  Moreover, if $T^{\ast} < \infty$, then $\limsup_{t\to T^{\ast}} \|u(\cdot, t)\|_{X} = \infty.$ 
\end{Theorem}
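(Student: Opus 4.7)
The plan is to mimic the contraction argument used for NLS in Theorem \ref{nlsrt} and NLW in Theorem \ref{pnlw}, substituting the Klein-Gordon propagators and the corresponding multiplier bound from Proposition \ref{kgpe}. First, I would rewrite \eqref{nlkg} via Duhamel's formula as the fixed-point equation
$$ u(\cdot,t) = \tilde{K}(t)u_0 + K(t) u_1 - \mathcal{B}F(u), $$
where
$$ K(t) = \frac{\sin\bigl(t\sqrt{I-\Delta}\bigr)}{\sqrt{I-\Delta}}, \quad \tilde{K}(t) = \cos\bigl(t\sqrt{I-\Delta}\bigr), \quad (\mathcal{B}v)(t,x)=\int_0^t K(t-\tau) v(\tau, x)\, d\tau. $$
On the Fourier side these propagators are precisely the multipliers $\mu_1,\mu_2$ of \eqref{mult3}, so Proposition \ref{kgpe} supplies the linear estimates
$$ \|\tilde{K}(t)u_0\|_X \leq C_T \|u_0\|_X, \qquad \|K(t) u_1\|_X \leq C_T \|u_1\|_X, \qquad 0 \leq t \leq T, $$
with $C_T = c_n(1+T^2)^{n/4}$.

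Next, working in the Banach space $X_T = C([0,T]; X)$ with the sup-in-$t$ norm, I would apply the real entire estimate \eqref{favest} from Theorem \ref{convs} pointwise in $t$, combined with Minkowski's integral inequality, to conclude
$$ \|\mathcal{B}F(u)\|_{X_T} \leq T\, C_T \, \tilde{F}\bigl(\|u\|_{X_T}, \|u\|_{X_T}\bigr) = T\, C_T \,\|u\|_{X_T}\, G\bigl(\|u\|_{X_T}\bigr), $$
where $G$ is the real analytic, monotonically increasing function on $[0,\infty)$ defined by $\tilde{F}(x,x) = x\, G(x)$; this factorisation is available since $F(0)=0$ forces $a_{00}=0$. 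Setting $\mathcal{J}(u) := \tilde{K}(t)u_0 + K(t)u_1 - \mathcal{B}F(u)$ and $M = 2 C_1(\|u_0\|_X + \|u_1\|_X)$, I would then choose $T \leq 1$ small enough, depending only on $M$, so that $\mathcal{J}$ maps the closed ball $X_{T,M}\subset X_T$ into itself.

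For the contraction estimate, I would invoke Proposition \ref{Fu-Fv} together with the same linear bound to obtain
$$ \|\mathcal{J}(u) - \mathcal{J}(v)\|_{X_T} \leq 2 T\, C_T \, \bigl(\widetilde{\partial_x F} + \widetilde{\partial_y F}\bigr)(2M, 2M) \, \|u-v\|_{X_T}, $$
and then shrink $T$ further so that the prefactor is at most $1/2$. Banach's contraction principle then yields a unique fixed point of $\mathcal{J}$ in $X_{T,M}$, solving \eqref{nlkg} on $[0,T^*]$ for some $T^*>0$ depending only on $\|u_0\|_X + \|u_1\|_X$. The maximal-interval/blow-up alternative follows by iterating local existence from later times, exactly as in the proof of Theorem \ref{nlsrt}. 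I do not expect any genuinely new analytical obstacle here: the elliptic symbol $\sqrt{I-\Delta}$ renders the Klein-Gordon propagators formally more elaborate than those of the wave equation, but Proposition \ref{kgpe} already packages all the modulation-space mapping properties needed, so the argument reduces to the same bookkeeping as in Theorem \ref{pnlw}.
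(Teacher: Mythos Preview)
Your proposal is correct and follows essentially the same route as the paper: rewrite \eqref{nlkg} via Duhamel with the Klein--Gordon propagators $K(t),\tilde K(t)$, invoke Proposition~\ref{kgpe} for the linear bounds, Theorem~\ref{convs} and Proposition~\ref{Fu-Fv} for the nonlinear and difference estimates, and close by Banach's contraction principle in $X_{T,M}$. The only cosmetic discrepancies are that the paper names the Duhamel operator $\mathcal C$ rather than $\mathcal B$ and records the integral term with a $+$ sign; neither affects the argument.
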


\begin{proof} The equivalent form of equation \eqref{nlkg} is 
\begin{equation}
u(\cdot, t) = \tilde{K}(t)u_{0}+ K(t)u_{1} + \mathcal{C}F(u), 
\end{equation}
where now
$$K(t)= \frac{\sin t(I-\bigtriangleup)^{1/2}}{(I-\bigtriangleup)^{1/2}}, \tilde{K}(t)= \cos t(I- \bigtriangleup)^{1/2},  \ (\mathcal{C}v)(t,x) = \int_{0}^{t} K(t-\tau)v(\tau, x) d\tau. $$

By using Proposition \ref{kgpe} and the notations above, we can write

\begin{equation}
\begin{cases}
\|\tilde{K}u_{0}\|_{X} \leq C_{T} \|u_{0}\|_{X},\\
\|K(t)u_{1}\|_{X} \leq C_{T} \|u_{1}\|_{X}, \\
\|\mathcal{C}F(u)\|\leq T C_{T}\tilde{F}( \|u\|_{X},  \|u\|_{X}),
\end{cases}
\end{equation}
Now the standard contraction mapping argument applied to $\mathcal{J}$ gives the proof.
\end{proof}

\noindent
{\bf Some open questions.} 

1. It would be interesting to know, if $F$ operates on $M^{p,1}(\R^n)$ for $p>1$ implies that 
$F$ is real analytic. If yes, this will give a characterisation for functions operating on $M^{p,1}(\R^n)$ 
for $p>1$ as well.

2. We have shown the local well-posedness results for real entire nonlinearities on $M^{p,1}(\R^n)$
for $1\leq p \leq \infty$. Since any real analytic function vanishing at origin, maps $M^{1,1} (\R^n)$ 
to itself, by the Theorem \ref{re},  it is natural to ask, if the local well-posedness can be 
proved in $M^{1,1}(\R^n)$, for real analytic nonlinearities.\\

\noindent
{\textbf{Acknowledgement}:} This work is part of the Ph. D. thesis of the first author. He wishes to thank the Harish-Chandra Research institute, the Dept. of Atomic Energy, Govt. of India, for providing excellent research facility.

\end{document}